\theoremstyle{thmstyleone}%
\newtheorem{theorem}{Theorem}%  meant for continuous numbers
\newtheorem{proposition}{Proposition}%
\newtheorem{lemma}{Lemma}
\newtheorem{corollary}{Corollary}
\newtheorem*{question*}{Question}
\theoremstyle{thmstyletwo}%
\newtheorem{example}{Example}%
\newtheorem{remark}{Remark}%
\theoremstyle{thmstylethree}%
\newtheorem{definition}{Definition}%
\newcommand{\abs}[1]{\lvert#1\rvert}
\newcommand*\induce[1]{#1_\mathcal{K}}
\newcommand*\hyp[1]{\mathcal{#1}}
\begin{document}

\title[Article Title]{Mean dimension explosion of induced homeomorphisms}

%%=============================================================%%
%% GivenName	-> \fnm{Joergen W.}
%% Particle	-> \spfx{van der} -> surname prefix
%% FamilyName	-> \sur{Ploeg}
%% Suffix	-> \sfx{IV}
%% \author*[1,2]{\fnm{Joergen W.} \spfx{van der} \sur{Ploeg} 
%%  \sfx{IV}}\email{iauthor@gmail.com}
%%=============================================================%%

\author*{\fnm{Gabriel} \sur{Lacerda}}\email{lacerda@im.ufrj.br}

\author*{\fnm{Sergio} \sur{Romaña}\footnote{The first author is a PhD candidate at UFRJ, funded by a CNPq grant, and gratefully acknowledges financial support for this research from the Fulbright U.S. Student Program, which is sponsored by the U.S. Department of State and Fulbright Brasil. The second author was supported by FAPERJ with the grant Bolsa Jovem Cientista do Nosso Estado No. E-26/201.432/2022, Brazil, NNSFC 12071202, and NNSFC 12161141002 from China.}}\email{sergio@mail.sysu.edu.cn}

\affil{\orgdiv{Mathematics Department}, \orgname{Federal University of Rio de Janeiro}, \orgaddress{\street{Av. Athos da Silveira Ramos 149}, \city{Rio de Janeiro}, \postcode{21945-970}, \state{RJ}, \country{Brazil}}}

\affil{\orgdiv{School of Mathematics (Zhuhai)}, \orgname{Sun Yat-sen University}, \postcode{519802}, \country{China}}

%%==================================%%
%% Sample for unstructured abstract %%
%%==================================%%

\abstract{Let $X$ be a compact metric space and $T: X \to X$ a continuous function. The induced hyperspace map $\induce{T}$ acts on the hyperspace $\hyp{K}(X)$ of closed and nonempty subsets of $X$, and on the continuum hyperspace $\hyp{C}(X) \subset \hyp{K}(X)$ of connected sets. This work studies the mean dimension explosion phenomenon: when the base system $T$ has zero topological entropy, but the mean dimension of the induced map $\induce{T}$ is infinite. In particular, this phenomenon occurs for Morse-Smale diffeomorphisms. Furthermore, for a circle homeomorphism $H$, the mean dimension explosion does not occur if and only if $H$ is conjugate to a rotation. For the metric mean dimension, a different result is obtained: we establish sufficient conditions for the induced hyperspace map to have zero or infinite metric mean dimension.}

\keywords{Topological mean dimension, metric mean dimension, hyperspace, continuum hyperspace}

%%\pacs[JEL Classification]{D8, H51}

\pacs[MSC Classification]{37B02, 54B20, 54F16}

\maketitle

\section{Introduction}

The study of dynamical systems is primarily concerned with the complexity of the orbits of a map $T: X \to X$, where $X$ is a compact metric space and $T$ is a continuous function. There exist numerous tools to evaluate this complexity, and the mean dimension is one of them. Proposed by M. Gromov in \cite{gromov_1999}, the \emph{mean dimension}, denoted as $\text{mdim}(X, T)$, is a topological invariant of dynamical systems, and it is useful to classify maps acting on infinite-dimensional topological spaces, as the mean dimension of $(X, T)$ is zero when the dimension of the phase space $X$ is finite.

E. Lindenstrauss and B. Weiss later developed the mean dimension in \cite{lindenstrauss_weiss_2000}, where they defined the metric mean dimension. It is a simpler tool to use compared to the original mean dimension, as evidenced by several recent results in the literature:\cite{hayes_2017, lindenstrauss_tsukamoto_2018, carvalho_rodrigues_varandas_2020, acevedo_romana_arias_2024}.

This work applies the theory of mean dimension and metric mean dimension to the induced hyperspace map $\induce{T}$ acting on the hyperspace of compact sets, $\hyp{K}(X)$, and on the hyperspace of subcontinua, $\hyp{C}(X)$. Since these hyperspaces are often infinite-dimensional, studying the mean dimension of the induced hyperspace map offers an alternative to classifying the complexity of well-known finite-dimensional dynamical systems. Moreover, the mean dimension appears as an alternative for evaluating the complexity of these toy models of infinite-dimensional dynamical systems.

In particular, we prove that a broad class of zero topological entropy dynamical systems acting on a finite-dimensional phase space have infinite mean dimension in the hyperspace, a phenomenon we term \textit{explosion}.

The investigation of the relationship between the base system $T$ and the induced map $\induce{T}$ began with W. Bauer and K. Sigmund in \cite{bauer_sigmund_1975}, where, in particular, they proved that \textit{if $\induce{T}: \hyp{K}(X) \to \hyp{K}(X)$ is topologically transitive, then so is $T: X \to X$}. Recently, M. Lampart and P. Raith established in \cite{lampart_raith_2010} a sufficient condition on $(X, T)$ for which the topological entropy $h(\induce{T})$ is infinite. Therefore, it is a valid question whether the metric mean dimension of an induced map is finite or infinite. Surprisingly, we present a sufficient condition on $T$ ensuring that the mean dimension of the induced map, $\text{mdim}(\hyp{K}(X), \induce{T})$, is infinite. For the following, a \emph{continuum} is a compact and connected metric space.\\

\begin{theorem}\label{thm:nonwandering-infty-mean-dim}
    Let $X$ be a locally connected continuum and $T: X \to X$ a homeomorphism. If the nonwandering set $\Omega(T)$ is a strict subset of $X$, then $\normalfont{\text{mdim}}(\hyp{K}(X), \induce{T}) = \infty$.\\
\end{theorem}

The identity map Id on $X$ satisfies $\Omega(\text{Id}) = X$ and $\text{mdim}(\hyp{K}(X), \induce{\text{Id}}) = 0$. Furthermore, in the special case of $X=S^1$, we exhibit a family of dynamics $H$ such that $\Omega(H)=S^1$ and $\text{mdim}(\hyp{K}(S^1), \induce{H})=0$, demonstrating that the condition in Theorem \ref{thm:nonwandering-infty-mean-dim} is sharp (see Corollary \ref{coro:rotation-number-mean-dimension}).

Theorem \ref{thm:nonwandering-infty-mean-dim} applies to a certain class of dynamical systems with zero topological entropy, which includes the Morse-Smale diffeomorphisms. In essence, the phenomenon of mean dimension explosion arises for the induced hyperspace map of these systems.

Another application of the above theorem is that every minimal dynamical system with finite mean dimension can be embedded in a subsystem of $\induce{T}$, when the nonwandering set of $T$ is strictly contained in the phase space. For context, Y. Gutman and M. Tsukamoto showed in \cite{gutman_tsukamoto_2020} that any minimal map with $\text{mdim}(X, T) < \frac{N}{2}$ can be embedded into the shift on $([0, 1]^N)^\mathbb{Z}$. Moreover, Theorem \ref{thm:nonwandering-infty-mean-dim} provides examples of maps that cannot be embedded into the shift on the Hilbert cube.

One could ask whether, for induced dynamics that attain infinite mean dimension, there exist subdynamics that achieve a mean dimension equal to any prescribed non-negative real number. The answer is positive, as indicated by the following result.\\

\begin{corollary}\label{coro:alpha_equal_mean_dimension_induced_explosion}
Let $X$ be a locally connected continuum and $T: X \to X$ a homeomorphism such that $\Omega(T) \subsetneq X$. Then, for every non-negative real number $\alpha$, there exists a $\induce{T}$-invariant set $Y_\alpha \subset \hyp{K}(X)$ such that $\emph{mdim}(Y_\alpha, \induce{T}|_{Y_\alpha}) = \alpha$.\\
\end{corollary}

In other words, Corollary \ref{coro:alpha_equal_mean_dimension_induced_explosion} states that $(X, T)$ has quasi-factors of any prescribed mean dimension. This interpretation uses the concept of quasi-factors, which was introduced by E. Glasner and B. Weiss in \cite{glasner_weiss_1995}.

In \cite[Proposition 6]{bauer_sigmund_1975}, the authors proved that \textit{if the topological entropy $h(T)$ is positive, then $h(\induce{T})$ is infinite}. This fact leads to the following question:\\  

\begin{question*}
    Let $X$ be a locally connected continuum and $T: X \to X$ a homeomorphism. If $h(T) > 0$, then we also have $\normalfont{\text{mdim}}(\hyp{K}(X),\induce{T}) = \infty$?\\
\end{question*}

The condition of being a locally connected continuum is necessary because D. Burguet and R. Shi gave in \cite{burguet_shi_2022} an example of a dynamical system $(X, T)$, where $X$ is zero-dimensional, such that the topological entropy $h(T)$ is positive, but $\text{mdim}(\hyp{K}(X), \induce{T})$ is zero. Furthermore, we show in Theorem \ref{thm:existence-compact-space-pos-entropy-zero-metric-mean-dim} the existence of a dynamical system such that the base map has positive topological entropy and the induced hyperspace map has zero metric mean dimension.

The continuum hyperspace $\hyp{C}(X)$ is an infinite-dimensional topological space when $X$ is a continuum with no free arcs. Given this, it is valid to ask if there is an analog of Theorem \ref{thm:nonwandering-infty-mean-dim} for such topological spaces $X$. In particular, A. Arbieto and J. Bohorquez proved in \cite[Theorem B]{arbieto_bohorquez_2023} that \textit{the topological entropy of the induced continuum map $\induce{F}: \hyp{C}(N^m) \to \hyp{C}(N^m)$ is either zero or infinite} where $F: N^m \to N^m$ is a Morse-Smale diffeomorphism acting on a connected and compact boundaryless $m$-dimensional manifold $N^m$. Our second result deals with the mean dimension of the induced continuum map of Morse-Smale dynamics. In fact, we establish the following result:\\

\begin{theorem}\label{thm:continuum-hyp-morse-smale}
    Let $F: N^m \to N^m$ be a Morse-Smale diffeomorphism. Hence, the following dichotomy holds:
    \begin{itemize}
        \item if $m = 1$, then $\normalfont{\text{mdim}}(\hyp{C}(N^m), \induce{F}) = 0$;
        \item if $m > 1$, then $\normalfont{\text{mdim}}(\hyp{C}(N^m), \induce{F}) = \infty$.
    \end{itemize}
\end{theorem}

The hypothesis of Theorem \ref{thm:continuum-hyp-morse-smale} is weaker than the hypothesis of Theorem \ref{thm:nonwandering-infty-mean-dim} because the nonwandering set of a Morse-Smale diffeomorphism is finite. As a consequence, Theorem \ref{thm:continuum-hyp-morse-smale} states that the mean dimension explosion also occurs for the induced continuum map, when the dimension of the phase space is greater than one.

Another kind of dichotomy holds for the mean dimension as well. It is a stronger version of the following fact: \textit{if $H$ is a homeomorphism on $S^1$, then the topological entropy of $\induce{H}: \hyp{K}(S^1) \to \hyp{K}(S^1)$ is either zero or infinite}. This fact is proved in \cite[Theorem 5]{lampart_raith_2010}. Based on this result, we have the following statement:\\

\begin{theorem}\label{thm:dichotomy-circle-homeo}
    Given $H: S^1 \to S^1$ a homeomorphism, then $\normalfont{\text{mdim}}(\hyp{K}(S^1), \induce{H})$ is either $0$ or $\infty$.\\
\end{theorem}

The ideas used in the proof of Theorem \ref{thm:dichotomy-circle-homeo} are similar to those presented in \cite{lampart_raith_2010}, where the presence of periodic orbits plays a crucial role. Furthermore, since orientation-preserving homeomorphisms on the circle are well understood, we can utilize Theorem \ref{thm:dichotomy-circle-homeo} to classify them based on the mean dimension of their induced hyperspace map. In other words, the phenomenon of mean dimension explosion is fully understood for circle homeomorphisms, since all circle homeomorphisms have zero topological entropy. \\

\begin{corollary}\label{coro:rotation-number-mean-dimension}
    Let $H: S^1 \to S^1$ be an orientation-preserving homeomorphism, then $\normalfont{\text{mdim}}(\hyp{K}(S^1), \induce{H}) = 0$ if, and only if, $H$ is conjugated to a rotation.\\
\end{corollary}

Recently, B. Zhu, X. Huang, and X. Wang proved in \cite{zhu_huang_wang_2024} that if the mean dimension of the base system $T$ is positive, then the mean dimension of the induced hyperspace system $\induce{T}$ is infinite. We would like to point out that this does not explain the phenomenon of mean dimension explosion, as this phenomenon specifically occurs when the base system is defined on a finite-dimensional phase space and has zero topological entropy.

It is important to state that the mean dimension explosion phenomenon does not occur for the induced system of Borel probability measures $\hyp{M}(X)$ equipped with the push-forward map $T_*: \mu \mapsto \mu\circ T^{-1}$, where $T: X \to X$ is a continuous map, because D. Burguet and R. Shi proved in \cite{burguet_shi_2022} that $h(T) > 0$ if, and only if, $\text{mdim}(\hyp{M}(X), T_*) = \infty$. 

In the latter part of this work, we study the \emph{metric mean dimension} of induced maps. If $d$ is a metric on $X$, the metric mean dimension is denoted by $\text{mdim}_M(X, d, T)$. The standard metric for both hyperspaces, which coincides with the natural topology on $\hyp{K}(X)$, is the Hausdorff metric $d_H$. In our first result of this part, we investigate under which conditions on $T$ the upper metric mean dimension of the induced hyperspace map $\induce{T}$ is zero. This condition is given in terms of the \emph{polynomial entropy} of $T$, $h_{\text{pol}}(T)$, a refinement of the topological entropy for maps with low complexity. It was defined by C. Labrousse and J.P. Marco in \cite{labrousse_marco_2014}. Our result relies on the fact that any locally connected continuum admits a convex metric, which is a type of geodesic metric.\\

\begin{theorem}\label{thm:poly-entropy-less1-metric-mean-dim-zero}
    Consider $X$ a locally connected continuum endowed with a convex metric $d$ and $T: X \to X$ a map. If $h_{\emph{pol}}(T) < 1$, then $\emph{mdim}_M(\hyp{K}(X), d_H, \induce{T}) = 0$.\\
\end{theorem}

In fact, the above result follows from a stronger statement: if the polynomial entropy of $T$ is strictly less than $1$, then the topological entropy of the induced hyperspace map $\induce{T}$ is zero. This statement is proved in a lemma in this work.

A simpler condition for obtaining zero metric mean dimension for the induced hyperspace map $\induce{T}$ is that the base map $T$ is an isometry. This implies that $\induce{T}$ is an isometry too, and hence the complexity of its orbits is low; therefore, it does not have a positive upper metric mean dimension.

A valid question is whether the converse of the above theorem holds. The converse does not hold for a general compact metric space, as the following result shows.\\

\begin{theorem}\label{thm:existence-compact-space-pos-entropy-zero-metric-mean-dim}
    There exist a compact metric space $(\Lambda, \rho)$ and a map $T: \Lambda \to \Lambda$ such that $h(T) > 0$ and $\emph{mdim}_M(\hyp{K}(\Lambda), \rho_H, \induce{T}) = 0$.\\
\end{theorem}

Since the polynomial entropy is a finite number only if the topological entropy is zero, the above result shows that the converse of Theorem \ref{thm:poly-entropy-less1-metric-mean-dim-zero} does not hold for a compact metric space. In fact, the behavior of the metric mean dimension for induced hyperspace maps is more complex when the base space is a general compact metric space. Indeed, X. Huang and X. Wang proved in \cite{huang_wang_2022} that for the shift map $\sigma$ on the symbolic space of $N$ symbols, $\Sigma_N$, endowed with the usual metric $d(\xi,\eta)=2^{-\min\{n\ge1:\,\xi_n\ne \eta_n\}}$, one has $\text{mdim}(\hyp{K}(\Sigma_N),d_H,\induce{\sigma})=\infty$.\\

\begin{question*}
    Let $X$ be a locally connected continuum endowed with a convex metric $d$, and let $T: X \to X$ be a map. If $h(T) > 0$, does it follow that $\emph{mdim}_M(\hyp{K}(X), d_H, \induce{T}) = \infty$? 
    
    Alternatively, if $X$ is a compact metrizable space, is there always some metric $\rho$ on $X$ such that $h(T) > 0$ implies $\emph{mdim}_M(\hyp{K}(X), \rho_H, \induce{T}) = \infty$?\\
\end{question*}

The hypothesis on $X$ being a locally connected continuum is crucial because the topological dimension of $\hyp{K}(X)$ is infinite only if $X$ is this type of topological space. Hence, it is possible for the upper metric mean dimension of $\induce{T}$ to be infinite, since it is bounded above by the box-counting dimension of the phase space. Moreover, it is important to note that the metric mean dimension depends on the choice of metric for the phase space.

As a final result, we present a slight generalization of Theorem \ref{thm:nonwandering-infty-mean-dim}. More precisely, if we do not require the map $T$ to be injective, then metric mean dimension of $\induce{T}$ is infinite.\\

\begin{theorem}\label{thm:surjective-map-nonwandering-infinite-metric-mean-dim}
    Let $X$ be a locally connected continuum endowed with a convex metric $d$. If $T: X \to X$ is a surjective map and $\Omega(T) \subsetneq X$, then $\emph{mdim}_M(\hyp{K}(X), d_H, \induce{T}) = \infty$.\\
\end{theorem}

We point out that the condition of the nonwandering set being strictly contained in the phase space does not preclude a dynamical system from having positive topological entropy, since there are numerous examples of maps with both positive entropy and wandering sets.

\subsection{Reading guide}

This work is organized as follows: Section \ref{sec:basic-def} provides the fundamental definitions needed to understand the theory discussed in this paper. Section \ref{sec:mean-dimension-induced-systems} is dedicated to studying induced hyperspace maps of homeomorphisms and their mean dimension; specifically, \S\ref{subsec:proof-thm-infty-mean-dim-closed-hyp} presents the proof of Theorem \ref{thm:nonwandering-infty-mean-dim} and Corollary \ref{coro:alpha_equal_mean_dimension_induced_explosion}, and \S\ref{subsec:proof-thm-morse-smale-continuum-hyp} provides the proof of Theorem \ref{thm:continuum-hyp-morse-smale}. The mean dimension explosion is completely understood for circle homeomorphisms, and \S\ref{subsec:explosion-circle} includes the proofs of Theorem \ref{thm:dichotomy-circle-homeo} and Corollary \ref{coro:rotation-number-mean-dimension}. Finally, Section \ref{sec:metric-mean-dim-induced} focuses on the metric mean dimension for the induced hyperspace maps, where we present the proofs of Theorems \ref{thm:poly-entropy-less1-metric-mean-dim-zero}, \ref{thm:existence-compact-space-pos-entropy-zero-metric-mean-dim}, and \ref{thm:surjective-map-nonwandering-infinite-metric-mean-dim}.

%%%%%%%%%%%%%%%%%%%%%%%%%%%%%%%%%%%%%%%%%%%%%%%%%%%%%%%%%%%%%%%%%%%%%%%%%%%%%%%%%%%%

\section{Basic Definitions}\label{sec:basic-def}

From now on, let $(X, d)$ be a compact metric space, and let $T: X \to X$ be a continuous function, or simply, a map. A dynamical system is the pair $(X, T)$. In specific cases, $X$ is a nondegenerate locally connected continuum. In other words, $(X, d)$ is a Peano continuum, which is arcwise connected; that is, any two points of $X$ can be joined by an arc in $X$. Moreover, $d$ can be chosen to be a convex metric. Precisely, a convex metric $d$ on $X$ is a metric that induces the topology on $X$ and for which midpoints always exist. That is, for any $x, y \in X$, there exists $z \in X$ such that
\begin{equation*}
    d(x, z)  = \frac{1}{2}d(x, y) = d(z, y).
\end{equation*}
For further information on locally connected continua and convex metrics, see \cite[Section 10]{illanes_nadler_1999}. To set some notation, for a given $\varepsilon > 0$, define $B_\varepsilon(x) = \{y \in X : d(x, y) < \varepsilon\}$. 

To avoid excessive use of parentheses, we write $Tx$ for the image of $x \in X$ under the map $T$. We say that a closed set $Y \subset X$ is \emph{$T$-invariant} if, for every $y \in Y$, $Ty \in Y$, that is, $T(Y) \subset Y$. In this case, $(Y, T|_Y)$ is also a dynamical system. Moreover, $Y$ is \emph{totally invariant} if $T(Y) = Y$. An important set in the study of dynamical systems is the \textit{nonwandering set} of $T$, denoted by $\Omega(T)$. A point $x \in \Omega(T)$ if, for every neighborhood $U$ of $x$ in $X$, there exists an integer $n \geq 1$ such that $T^n(U) \cap U \neq \varnothing$. Equivalently, there exists $m \geq 1$ such that $T^{-m}(U) \cap U \neq \varnothing$. The nonwandering set is nonempty, compact, and invariant.

In general, if $E$ is a finite set, we denote its cardinality by $\#E$.

%%%%%%%%%%%%%%%%%%%%%%%%%%%%%%%%%%%%%%%%%%%%%%%%%%%%%%%%%%%%%%%%%%%%%%%%%%%%%

\subsection{Hyperspaces and induced maps}\label{section:hyperspaces-induced-maps}

A \textit{hyperspace} is a designated collection of subsets of $X$. In this work, we study the following hyperspaces:

\begin{itemize}
    \item $\hyp{K}(X) = \{A \subset X; A \text{ is closed and nonempty} \}$;
    \item $\hyp{C}(X) = \{A \in \hyp{K}(X); A \text{ is connected}\}$, when $X$ is a continuum.
\end{itemize}

Observe that $\hyp{C}(X) \subset \hyp{K}(X)$. In this text, we designate these as the \textit{compacta hyperspace} and the \textit{continuum hyperspace}, respectively. We always assume that $X$ is not a singleton. In both hyperspaces, we consider the \emph{Hausdorff metric} $d_H$ defined as 
\begin{equation*}
    d_H(A, B) = \inf\{\varepsilon \geq 0; A \subset B_\varepsilon \text{ and } B \subset A_\varepsilon\},
\end{equation*}
where $A_\varepsilon = \bigcup_{x \in A} \{y \in X; d(x, y) \leq \varepsilon\}$ is the \emph{$\varepsilon$-fattening of A}. In this context, given $\varepsilon > 0$, define $\textbf{B}^H(A, \varepsilon) = \{B \in \hyp{K}(X); d_H(A, B) < \varepsilon\}$. Moreover, whenever $X$ is a compact topological space, the topology generated by $d_H$ coincides with the \emph{Vietoris topology}, whose basis consists of the following collection of sets indexed by $k \in \mathbb{N}$:
\begin{equation*}
    \langle U_1, \dots, U_k \rangle = \{A \in \hyp{K}(X); A \subset \bigcup_{i=1}^k U_i \text{ and } A \cap U_i \neq \varnothing \text{ for each } 1 \leq i \leq k\},
\end{equation*}
where $U_1, \dots, U_k$ are nonempty open subsets of $X$.

%\begin{proposition}
%For $A \in \hyp{K}(X)$ and $\varepsilon > 0$,
%    \begin{equation}\label{eq:hausdorff_ball_equivalence}
%        \emph{\textbf{B}}^H(A, \varepsilon) = \{C \in \hyp{K}(X); C \subset A_\varepsilon \text{ and } C \cap B_\varepsilon(x) \neq \varnothing \text{ for all } x \in A\},
%    \end{equation}
%    where $B_\varepsilon(x) = \{y \in X; d(x, y) \leq \varepsilon\}$.
%\end{proposition}
%\begin{proof}
%    Note that if $C \in \textbf{B}^H(A, \varepsilon)$, then there is $0 < \eta \leq \varepsilon$ such that $C \subset A_\eta$ and $A \subset C_\eta$. Hence, $C \subset A_\varepsilon$. Since $A \subset C_\eta$, for all $x \in A$, there exists $y \in C$ such that $d(x, y) \leq \eta$. In particular, $d(x, y) \leq \varepsilon$. This implies that for all $x \in A$, $C \cap B_\varepsilon(x) \neq \varnothing$. On the other hand, if, for all $x \in A$, $C \cap B_\varepsilon(x) \neq \varnothing$, then, for all $x \in A$, there is $y \in C$ such that $d(x, y) \leq \varepsilon$. Hence, $A \subset C_\varepsilon$. Since $C \subset A_\varepsilon$, we have $d_H(A, C) \leq \varepsilon$. 
%\end{proof}

It is known that $(\hyp{K}(X),d_H)$ is a compact metric space \cite[Theorem 3.5]{illanes_nadler_1999}. Moreover, if $X$ is a nondegenerate locally connected continuum, then $\hyp{K}(X)$ is homeomorphic to the Hilbert cube $[0, 1]^{\mathbb{Z}}$. If, in addition, $X$ has no free arcs\footnote{A continuum $X$ has no free arcs if, for every $\gamma \subset X$ homeomorphic to the unit interval $[0, 1]$, the interior of $\gamma$ is not open in $X$.}, then $\hyp{C}(X)$ is also homeomorphic to the Hilbert cube (cf. \cite{curtis_schori_1974}). In both cases, the topological dimension of $\hyp{K}(X)$ and $\hyp{C}(X)$ is infinite.

Let $F: X \to X$ be a map, the \textit{induced hyperspace map} $\induce{F}: \hyp{K}(X) \to \hyp{K}(X)$ is given by $\induce{F}(A) := F(A)$. Note that $F(A)$ is closed in $X$, because $F$ is continuous, and therefore $\induce{F}$ is well defined. It is also well known that $\induce{F}$ is continuous, and a homeomorphism if and only if $F$ is a homeomorphism. The \textit{induced continuum map} is intuitively defined as the map $\induce{F}$ restricted to $\hyp{C}(X)$, that is, $\induce{F}: \hyp{C}(X) \to \hyp{C}(X)$. An alternate notation is $\induce{F}|_{\hyp{C}(X)}: \hyp{C}(X) \to \hyp{C}(X)$.\\

%Given $(Y, \rho)$  a compact metric space and $F: X \to Y$ a map, the \textit{induced hyperspace map} $\induce{F}: \hyp{K}(X) \to \hyp{K}(Y)$ is given by $\induce{F}(A) := F(A)$. Note that $F(A)$ is closed in $Y$, because $F$ is continuous, and therefore $\induce{F}$ is well defined. It is also well known that $\induce{F}$ is continuous and a homeomorphism if and only if $F$ is a homeomorphism. The \textit{induced continuum map} is intuitively defined as the map $\induce{F}$ restricted to $\hyp{C}(X)$, that is, $\induce{F}: \hyp{C}(X) \to \hyp{C}(Y)$. An alternate notation is $\induce{F}|_{\hyp{C}(X)}: \hyp{C}(X) \to \hyp{C}(Y)$.\\

%\begin{remark}\label{prop:isometry-hyp}
%    If $R: X \to Y$ is an isometry, then $\induce{R}: \hyp{K}(X) \to \hyp{K}(Y)$ is also an isometry. In fact, since $R$ is an isometry, given $x_1, x_2 \in X$, $\rho(Rx_1, Rx_2) = d(x_1, x_2)$, where $\rho$ is a metric in $Y$. Given $\varepsilon > 0$, consider two closed sets $A, B \subset X$ such that $d_H(A, B) = \varepsilon$. Note that $R(A) \subset R(B)_\varepsilon$ because $A \subset B_\varepsilon$ and $R$ is an isometry. If $\varepsilon_0 \in (0, \varepsilon)$, then $R(A) \not\subset R(B)_{\varepsilon_0}$ because there are $a \in A$ and $b \in B$ such that $d(a, b) > \varepsilon_0$. Hence, $\rho(Ra, Rb) > \varepsilon_0$. Therefore, since there is no change in the distance between the two sets, $\rho_H(\induce{R}(A), \induce{R}(B)) = d_H(A, B)$.
%\end{remark}

%%%%%%%%%%%%%%%%%%%%%%%%%%%%%%%%%%%%%%%%%%%%%%%%%%%%%%%%%%%%%%%%%%%%%%%%%

\subsection{Mean dimension}

Consider $\mathcal{U} = \{U_i\}_{i \in I}$ a finite open cover of $X$. The \textit{order} $\text{ord}(\mathcal{U})$ is the maximum integer $k \geq 0$ such that there are pairwise distinct $i_0, ..., i_k \in I$ satisfying $U_{i_0} \cap ... \cap U_{i_k} \neq \varnothing$. A \textit{refinement} of $\mathcal{U}$ is an open cover $\mathcal{V} = \{V_j\}_{j \in J}$ of $X$ such that for every $V_j \in \mathcal{V}$ there is $U_i \in \mathcal{U}$ with $V_j \subset U_i$. The \textit{topological dimension} of $X$ is given by the supremum of the \textit{degree} $\mathcal{D}(\mathcal{U})$, which is the minimum order $\text{ord}(\mathcal{V})$ over all refinements $\mathcal{V}$ of $\mathcal{U}$ (see Definition 1.6.7 in \cite{engelking_1978} for details). The dimension of a topological space is denoted by $\dim(X)$. In general, the topological dimension coincides with the usual notion of dimension for more familiar spaces, such as manifolds. Moreover, $\dim([0,1]^{\mathbb{Z}}) = \infty$.

For two open covers $\mathcal{U}$ and $\mathcal{V}$ of $X$, the \textit{joint} is given by $\mathcal{U} \vee \mathcal{V} = \{U_i \cap V_j; i \in I, j \in J\}$. Since $T:X \to X$ is continuous, observe that $T^{-1}\mathcal{U} = \{T^{-1}U_i; i \in I\}$ is also an open cover of $X$. Therefore, the \textit{(topological) mean dimension} of $(X, T)$ is given by
\begin{equation*}
    \text{mdim}(X, T) = \sup\limits_{\mathcal{U}} \lim\limits_{n \to \infty} \frac{\mathcal{D}(\mathcal{U} \vee T^{-1}\mathcal{U}\vee \cdots \vee T^{-n+1}\mathcal{U})}{n},
\end{equation*}
where $\mathcal{U}$ runs over all finite open covers of $X$. Observe that the mean dimension is the dynamical analog of the topological dimension.\\

\begin{proposition}[\cite{lindenstrauss_weiss_2000, coornaert_2015}]
    Some useful basic properties of the mean dimension are:
\begin{enumerate}
    \item[1.] The mean dimension is a topological invariant and takes values in $[0, \infty]$;
    \item[2.] If the topological entropy of the dynamical system is finite, then its mean dimension is zero;
    \item[3.] If $Y$ is a closed $T$-invariant subset of $X$, then $\emph{mdim}(Y, T) \leq \emph{mdim}(X, T)$;
    \item[4.] If $X$ is finite-dimensional, then $\emph{mdim}(X, T) = 0$;
    \item[5.] For any dynamical system $(X, T)$,  $\emph{mdim}(X, T^n) = n\cdot\emph{mdim}(X, T)$;
    \item[6.] If the phase space is the Hilbert cube $[0, 1]^\mathbb{Z}$, and $\sigma: [0, 1]^\mathbb{Z} \to [0, 1]^\mathbb{Z}$ is the shift transformation, then $\emph{mdim}([0, 1]^\mathbb{Z}, \sigma) = 1$. Generally, $\emph{mdim}(([0, 1]^d)^\mathbb{Z}, \sigma) = d$.
\end{enumerate}
\end{proposition}

%%%%%%%%%%%%%%%%%%%%%%%%%%%%%%%%%%%%%%%%%%%%%%%%%%%%%%%%%%%%%%%%%%%%%%%%%

\subsection{Topological entropy and Metric mean dimension}

Given $n \in \mathbb{N}$, define the \emph{dynamical distance} $d_n$ as 
\[
d_n(x, y) = \max \{d\big(T^ix, T^iy\big), 0 \leq i \leq n - 1\},
\]
where $T^0$ is the identity. It is well known that $d_n$ is still a distance function on $X$ and generates the same topology as $d$. For $x \in X$ and $\varepsilon > 0$, we call $B_{(n, \varepsilon)}(x) = \{y \in X; d_n(x, y) < \varepsilon\}$ the \textit{$(n, \varepsilon)$-dynamical ball around x}.

Call $A \subset X$ a \textit{$(n, \varepsilon)$-separated set} if for any distinct points $x, y \in A$, $d_n(x, y) \geq \varepsilon$. Denote by $\text{Sep}(T, n, \varepsilon)$ the maximal cardinality of a $(n, \varepsilon)$-separated set, which is finite by the compactness of $X$. We say that $E \subset X$ is a \textit{$(n, \varepsilon)$-spanning set} for $X$ if for any $x \in X$, there exists $y \in E$ such that $d_n(x, y) < \varepsilon$. In other words, $E$ is a $(n, \varepsilon)$-spanning set if $X$ can be covered by the interior of the union of $(n, \varepsilon)$-dynamical balls centered at points of $E$. That is,
\begin{equation*}
    X \subset  \bigcup\limits_{y \in E} B_{(n, \varepsilon)}(y).
\end{equation*}
Let $\text{Span}(T, n, \varepsilon)$ be the minimum cardinality of any $(n, \varepsilon)$-spanning set. Define 
\[
h(T, \varepsilon) = \limsup\limits_{n \to \infty} \frac{\log \text{Sep}(T, n, \varepsilon)}{n} \text{ and } \widetilde{h}(T, \varepsilon) = \limsup\limits_{n \to \infty} \frac{\log \text{Span}(T, n, \varepsilon)}{n}.
\]

It is well known that $h(T, \varepsilon) \geq \widetilde{h}(T, \varepsilon)$ (cf. \cite[Chapter 3]{katok_hasselblatt_1995}). Note that if $\varepsilon_1 < \varepsilon_2$, then $\widetilde{h}(T, \varepsilon_1) \geq \widetilde{h}(T, \varepsilon_2)$. The \textit{topological entropy} $h(T)$ is the limit of both $h(T, \varepsilon)$ and $\widetilde{h}(T, \varepsilon)$ as $\varepsilon \to 0$, which coincide. That is,
\begin{equation*}
    h(T) = \lim_{\varepsilon \to 0} h(T, \varepsilon) = \lim_{\varepsilon \to 0} \widetilde{h}(T, \varepsilon).
\end{equation*}
When the topological entropy is infinite, we are interested in the growth of $\widetilde{h}(T, \varepsilon)$ with respect to $\varepsilon$. That is, the growth of the rate of exponential balls which cover $X$ in mean. This motivates the definition of upper and lower metric mean dimension, given in \cite{lindenstrauss_weiss_2000}.

The \textit{lower metric mean dimension} and the \textit{upper metric mean dimension} of $(X, T)$ are defined, respectively, by
\begin{equation}\label{eq:def-metric-mean-dim}
\underline{\text{mdim}_M}(X, d, T) = \liminf\limits_{\varepsilon \to 0} \frac{h(T, \varepsilon)}{-\log\varepsilon}
\;\;\;\text{and}\;\;\;\overline{\text{mdim}_M}(X, d, T) = \limsup\limits_{\varepsilon \to 0} \frac{h(T, \varepsilon)}{-\log\varepsilon}.
\end{equation}
In these definitions, $h(T, \varepsilon)$ can be replaced with $\widetilde{h}(T, \varepsilon)$. When both limits coincide, we denote this common value by $\text{mdim}_M(X, d, T)$. Unlike topological entropy, the metric mean dimension may change if the metric $d$ is changed.

The following important inequalities relate the mean dimension, the metric mean dimension, and the topological entropy whenever $d$ is a metric compatible with the topology on $X$ \cite[Theorem 4.2]{lindenstrauss_weiss_2000}:
\begin{equation}\label{eq:mean-metric-mean-inequality}
    \text{mdim}(X, T) \leq \underline{\text{mdim}_M}(X, d, T) \leq \overline{\text{mdim}_M}(X, d, T) \leq h(T).
\end{equation}
Note that if $h(T)$ is finite, then the metric mean dimension is zero. Therefore, the metric mean dimension is useful for distinguishing between dynamical systems with infinite topological entropy.

In contrast, the concept of \emph{polynomial entropy} is useful for distinguishing between dynamical systems with topological entropy equal to zero. It was introduced by C. Labrousse and J.P. Marco in \cite{labrousse_marco_2014} and is given by:
\begin{equation}
    h_{\text{pol}}(T) = \lim\limits_{\varepsilon \to 0}\limsup\limits_{n \to \infty} \frac{\log \text{Sep}(T, n, \varepsilon)}{\log n} = \lim\limits_{\varepsilon \to 0}\limsup\limits_{n \to \infty} \frac{\log \text{Span}(T, n, \varepsilon)}{\log n}.
\end{equation}

%%%%%%%%%%%%%%%%%%%%%%%%%%%%%%%%%%%%%%%%%%%%%%%%%%%%%%%%%%%%%%%%

\subsection{Morse-Smale diffeomorphisms.}\label{subsec:morse-smale-diffeos}

For $N^m$ an $m$-dimensional compact and connected smooth manifold without boundary, set $\text{Diff}^r(N^m)$ as the set of $C^r$ diffeomorphisms endowed with the $C^r$ topology, for $r \geq 1$. A periodic point $x \in N^m$ of period $k \geq 1$ for $F \in \text{Diff}^r(N^m)$ is \textit{hyperbolic} if the derivative $(DF^k)_x$ has its spectrum disjoint from the unit circle in $\mathbb{C}$. It is well known that in this case, we have the existence of stable and unstable manifolds of $x$, denoted by $W^s(x)$ and $W^u(x)$. \\

\begin{definition}
    $F \in \emph{Diff}^r(N^m)$ is \textit{Morse-Smale} if it satisfies the following conditions:
    \begin{enumerate}
        \item The set of nonwandering points, $\Omega(F)$, contains only a finite number of hyperbolic periodic points;
        \item The stable and unstable manifolds of the periodic points are all transversal to each other.
    \end{enumerate}
\end{definition}

The Morse-Smale diffeomorphisms have the following properties:
\begin{itemize}
    \item Every Morse-Smale diffeomorphism has an \textit{attractor} periodic point and a \textit{repeller} periodic point, that is, the stable (unstable) manifold of the attractor (repeller) periodic point is an immersed submanifold of dimension $m$;
    \item If $p$ is an attractor periodic point, then there is $q$ a repeller periodic point such that the stable manifold of $p$ intersects transversely the unstable manifold of $q$ and $W^s(p) \cap W^u(q) \neq \varnothing$;
    \item The topological entropy of a Morse-Smale diffeomorphism is always zero.
\end{itemize}

Further information on Morse-Smale diffeomorphisms can be found at \cite{arbieto_bohorquez_2023}.

\section{Mean dimension of induced homeomorphisms}\label{sec:mean-dimension-induced-systems}

This section is devoted to the study of the topological mean dimension of induced hyperspace maps for homeomorphisms defined in a locally connected continuum. A topological space is said to be \emph{arcwise connected} provided that any two of its points can be joined by an arc contained in this space.\\

\begin{proposition}[\cite{illanes_nadler_1999}, Theorem 10.2]\label{prop:loc-con-conti-arcwise-connected}
    Every locally connected continuum is arcwise connected.
\end{proposition}

\subsection{Proof of Theorem \ref{thm:nonwandering-infty-mean-dim}}\label{subsec:proof-thm-infty-mean-dim-closed-hyp}

Recall that from our hypothesis we suppose that $X$ is a locally connected continuum and $T: X \to X$ is a homeomorphism such that $\Omega(T) \subsetneq X$. The following proof relies on the fact that there is a wandering arc $\gamma_x$ contained in $X\setminus\Omega(T)$, which is guaranteed by Proposition \ref{prop:loc-con-conti-arcwise-connected}, and we show how to use it to increase the mean dimension of the hyperspace by identifying the hyperspace of the arc with the Hilbert cube.

\begin{proof}[Proof of Theorem \emph{\ref{thm:nonwandering-infty-mean-dim}}]
Remember that the nonwandering set is compact, hence $X\setminus\Omega(T)$ is an open set. Given $x \in X\setminus\Omega(T)$, there is $U_x$ a neighborhood of $x$ such that $T^n(U_x) \cap U_x = \varnothing$, where $n$ is a non-zero integer, because $T$ is a homeomorphism. As a consequence, $T^{n_1}(U_x) \cap T^{n_2}(U_x) = \varnothing$, for $n_1 \neq n_2$ in $\mathbb{Z}$. Let $\gamma_x$ be an arc contained in $U_x$, which exists because $X$ is a locally connected continuum \cite[Theorem 10.2]{illanes_nadler_1999}. Thus, consider $\varphi: [0, 1] \to \gamma_x$ a homeomorphism and $k \in \mathbb{N}$. Choose $k$ disjoint closed intervals $J_i$ in $[0, 1]$. For each $J_i$, $1 \leq i \leq k$, consider a homeomorphism $\psi_i: [0, 1] \to J_i$. If $x = (x_1, ..., x_k)$ is a point in $[0, 1]^k$, then $\varphi(\{\psi_i(x_i), 1 \leq i \leq k\}) \in \hyp{K}(X)$. Set $\psi(x) = \{\psi_i(x_i), 1 \leq i \leq k\}$, hence $\varphi\circ\psi(x)$ is an element of $\hyp{K}(X)$. Denote $\gamma_i = \varphi(J_i)$ a subset of $\gamma_x$.

    Define a map $\Phi: ([0, 1]^k)^\mathbb{Z} \to \hyp{K}(X)$, for $\xi = (..., \xi_{-1}, \xi_0, \xi_1, ...) \in ([0, 1]^k)^\mathbb{Z}$, as
    \begin{equation*}
        \Phi(\xi) = \Phi(..., \xi_{-1}, \xi_0, \xi_1, ...) = \left(\bigcup\limits_{n \in \mathbb{Z}} T^n\circ\varphi\circ\psi(\xi_n)\right) \cup \Omega(T).
    \end{equation*}

    Observe that the limit points of $\bigcup\limits_{n \in \mathbb{Z}} T^n\circ\varphi\circ\psi(\xi_n)$ are contained in the nonwandering set $\Omega(T)$, because $T^n\circ\varphi\circ\psi(\xi_n) \subset T^n(\gamma_x)$, for each $n \in \mathbb{Z}$, and the limit points of the wandering arc $\gamma_x$ are contained in the nonwandering set. Thus, $\Phi(\xi)$ is an element of the hyperspace.

    Given that $\induce{T}(\Omega(T)) = \Omega(T)$, we also have

    \begin{center}
    \begin{tabular}{ c c l }
     $\induce{T}\circ\Phi(\xi)$ & $=$ & $\left(\bigcup\limits_{n \in \mathbb{Z}} T^{n + 1}\circ\varphi\circ\psi(\xi_n)\right) \cup \Omega(T)$ \\ 
      & $=$ & $\left(\bigcup\limits_{n \in \mathbb{Z}} T^n\circ\varphi\circ\psi(\xi_{n - 1})\right) \cup \Omega(T)$ \\  
      & $=$ & $\Phi\circ\sigma^{-1}(\xi),$    
    \end{tabular}
    \end{center}   
    where $\sigma: ([0, 1]^k)^\mathbb{Z} \to ([0, 1]^k)^\mathbb{Z}$ is the shift transformation. Lindenstrauss \& Weiss proved in \cite{lindenstrauss_weiss_2000} that $\text{mdim}(([0, 1]^k)^\mathbb{Z}, \sigma) = k$. Since the mean dimension is a topological invariant, the rest of this proof is dedicated to proving that $\Phi$ is injective and continuous. Note that the above equation also shows that $\Phi\big(([0, 1]^k)^\mathbb{Z}\big)$ is invariant by $\induce{T}$.

    It is not hard to prove that $\Phi$ is injective because if $\xi, \eta \in ([0, 1]^k)^\mathbb{Z}$ are such that $\xi \neq \eta$, then there is $\ell \in \mathbb{Z}$ such that $\xi_\ell \neq \eta_\ell$. That is, there is $i \in \{1, ..., k\}$ such that $\xi_{\ell, i} \neq \eta_{\ell, i}$. Then, $\psi_i(\xi_{\ell, i}) \neq \psi_i(\eta_{\ell, i})$. Therefore, by construction, $\Phi(\xi) \neq \Phi(\eta)$.

    To prove continuity, consider $\xi \in ([0, 1]^k)^\mathbb{Z}$ and $\varepsilon > 0$. There exists $N \in \mathbb{N}$ such that for all $n \in \mathbb{Z}$ where $\abs{n} \geq N$, the distance between $T^n\circ\varphi\circ\psi(\xi_n)$ and $\Omega(T)$ is less than $\varepsilon/4$, that is, there is $z \in \Omega(T)$ such that $d(z, T^n\circ\varphi\circ\psi(\xi_n)) < \varepsilon/4$. Such $N$ exists because the limit points of the sequence $(T^n\circ\varphi\circ\psi(\xi_n))_{n \in \mathbb{Z}}$ are all contained in $\Omega(T)$. Let $d_*$ be a metric in $([0, 1]^k)^\mathbb{Z}$ given by 
    \begin{equation*}
        d_*(\xi, \eta) = \sum\limits_{n \in \mathbb{Z}} 2^{-\abs{n}}\lVert\xi_n - \eta_n\rVert,
    \end{equation*}
    where $\lVert.\rVert$ is the Euclidean metric in $[0, 1]^k$. It is well known that such a metric is compatible with the topology of $([0, 1]^k)^\mathbb{Z}$ \cite{vanMill_1989}. Let $\delta > 0$ and $\eta \in ([0, 1]^k)^\mathbb{Z}$ be such that $d_*(\xi, \eta) < \delta$ and $\xi_i = \eta_i$ for $\abs{i} \leq N$. Note that $\bigcup\limits_{\abs{i} \leq N} T^i\circ\varphi\circ\psi(\xi_i) = \bigcup\limits_{\abs{i} \leq N} T^i\circ\varphi\circ\psi(\eta_i)$. Therefore 

    \begin{center}
    \begin{tabular}{ l l l }
    $d_H(\Phi(\xi), \Phi(\eta))$ & $=$ & $d_H(\Phi(\xi)\setminus\bigcup\limits_{\abs{i} \leq N} T^i\circ\varphi\circ\psi(\xi_i), \Phi(\eta)\setminus\bigcup\limits_{\abs{i} \leq N} T^i\circ\varphi\circ\psi(\eta_i))$\\
    & $\leq$ & $d_H(\Phi(\xi)\setminus\bigcup\limits_{\abs{i} \leq N} T^i\circ\varphi\circ\psi(\xi_i), \Omega(T)) + d_H(\Omega(T), \Phi(\eta)\setminus\bigcup\limits_{\abs{i} \leq N} T^i\circ\varphi\circ\psi(\eta_i))$\\
     & $\leq$ & $\varepsilon/4 + \varepsilon/4 < \varepsilon$,
    \end{tabular}
    \end{center}
    because $\Omega(T) \subset \Phi(\xi)$ and $\Omega(T) \subset \Phi(\eta)$.

    Finally, $\Phi$ is a homeomorphism of $([0, 1]^k)^\mathbb{Z}$ onto its image. Hence
    \begin{equation*}
        \text{mdim}(\hyp{K}(X), \induce{T}) \geq \text{mdim}(\Phi(([0, 1]^k)^\mathbb{Z}), \induce{T}) = k.
    \end{equation*}
    Since $k \in \mathbb{N}$ is arbitrary, then $\text{mdim}(\hyp{K}(X), \induce{T}) = \infty$.
\end{proof}

Whenever the nonwandering set is finite, the topological entropy of $T$ is zero \cite[Proposition 1 \& 2]{lampart_raith_2010}. Therefore, Theorem \ref{thm:nonwandering-infty-mean-dim} proves that for homeomorphisms with finite nonwandering set, the explosion phenomenon occurs. The most common examples of these systems are the Morse-Smale diffeomorphisms, defined in §\ref{subsec:morse-smale-diffeos}. The following consequence is trivial if one recalls Equation (\ref{eq:mean-metric-mean-inequality}).\\

\begin{remark}
    Let $X$ be a locally connected continuum and $T: X \to X$ a homeomorphism. If the nonwandering set $\Omega(T)$ is a strict subset of $X$, then $\normalfont{\text{mdim}}(\hyp{K}(X),\rho, \induce{T}) = \infty$, where $\rho$ is any metric compatible with the topology of $\hyp{K}(X)$. In particular, $\emph{mdim}(\hyp{K}(X), d_H, \induce{T}) = \infty$.\\
\end{remark}

To show Corollary \ref{coro:alpha_equal_mean_dimension_induced_explosion}, we need to recall a useful result.\\

\begin{proposition}[\cite{coornaert_2015}, Corollary 7.6.3]
    Let $d$ be a positive integer and $\alpha$ be a real number such that $0 \leq \alpha \leq d$. Then there exists a subshift $\mathfrak{Z} \subset ([0, 1]^d)^\mathbb{Z}$ such that $\emph{mdim}(\mathfrak{Z}, \sigma|_\mathfrak{Z}) = \alpha$.\\
\end{proposition}

Note that for every real number $\alpha \geq 0$, there is always some $k \in \mathbb{N}$ such that $k \geq \alpha$. By the above proposition, there exists a subshift $\mathfrak{Z} \subset ([0, 1]^k)^\mathbb{Z}$ such that $\text{mdim}(\mathfrak{Z}, \sigma|_\mathfrak{Z}) = \alpha$. Therefore, by the proof of Theorem \ref{thm:nonwandering-infty-mean-dim}, $\Phi(\mathfrak{Z})$ is a compact $\induce{T}$-invariant subset of $\hyp{K}(X)$ such that $\text{mdim}(\Phi(\mathfrak{Z}), \induce{T}|_{\Phi(\mathfrak{Z})}) = \alpha$.

%%%%%%%%%%%%%%%%%%%%%%%%%%%%%%%%%%%%%%%%%%%%%%%%%%%%%%%%%
\subsection{Proof of Theorem \ref{thm:continuum-hyp-morse-smale}}\label{subsec:proof-thm-morse-smale-continuum-hyp}

    The core of its proof relies on the same idea as in Theorem \ref{thm:nonwandering-infty-mean-dim}: We connect the wandering arcs through their elements to form a connected set. Each connected set will be an arc, and hence an element of the continuum hyperspace.
    
\begin{proof}[Proof of Theorem \emph{\ref{thm:continuum-hyp-morse-smale}}]
    Given $N^m$ a one-dimensional connected and compact boundaryless manifold, that is, a manifold homeomorphic to the circle $S^1$, then, the topological entropy $h(\induce{F})$ restricted to $\hyp{C}(N^m)$ is zero for any homeomorphism $F$ \cite[Theorem 1 \& 4]{lampart_raith_2010}. In this case, $\overline{\text{mdim}}(\hyp{C}(N^m), d_H, \induce{F}) = 0$. Therefore, $\text{mdim}(\hyp{C}(N^m), \induce{F}) = 0$.

    From now on, suppose that $N^m$ is a compact manifold of dimension $m$ greater than one. Since $F$ is a Morse-Smale diffeomorphism, then there is, up to an iterate of $F$, $p$ and $q$ hyperbolic fixed points such that $W^s(p) \cap W^u(q) \neq \varnothing$. Just as in the proof of Theorem \ref{thm:nonwandering-infty-mean-dim}, consider $\gamma \subset W^s(p) \cap W^u(q)$ an arc such that $F^{n_1}(\gamma) \cap F^{n_2}(\gamma) = \varnothing$, for $n_1 \neq n_2$ in $\mathbb{Z}$.

    By the Hartman-Grobman Theorem, consider $V$ a neighborhood of $p$ such that there is a homeomorphism $\psi: V \to \psi(V) \subset \mathbb{R}^m$ where $0 \in \psi(V)$ and $(DF)_p\circ\psi = \psi\circ F$. Note that we can shrink $V$ so that $F(V) \subset V$ and $\psi(V)$ is a convex set. Let $K \in \mathbb{N}$ be such that $F^K(\gamma) \subset V$; hence $\psi\circ F^K(\gamma)$ is a curve in $\mathbb{R}^m$. Observe that $\psi\circ F^{K + 1}(\gamma)$ is also a curve in $\psi(V)$.

    Given $z \in \psi\circ F^K(\gamma)$ and $w \in \psi\circ F^{K + 1}(\gamma)$, set $\beta(z, w, t)$ as the straight line connecting $z$ to $w$, with $\beta(z, w, 0) = z$ and $\beta(z, w, 1) = w$. Therefore, $\beta(z, w, t)$ is, in particular, a continuous function on the first two coordinates. Define the curve $\kappa(x, y, t) = F^{-K}\circ \psi^{-1}(\beta(z, w, t))$ that connects a point $x$ in $\gamma$ to $y$ in $F(\gamma)$, then $\Gamma:\gamma\times F(\gamma) \to \hyp{C}(N^m)$ set as $\Gamma(x, y) = \{\kappa(x, y, t), t \in [0, 1]\}$ is a continuous function on both coordinates.

    Let $k \in \mathbb{N}$ and consider $\gamma_j \subset \gamma$ connected and closed sets, for $1 \leq j \leq k$. Since each $\gamma_j$ is also a curve, there is $\tau_j: [0, 1] \to \gamma_j$ a homeomorphism. Given $\xi \in ([0, 1]^k)^{\mathbb{Z}}$, define $\Phi: ([0, 1]^k)^{\mathbb{Z}} \to \hyp{C}(N^m)$ as 
    \begin{equation*}
        \Phi(..., \xi_{-1}, \xi_0, \xi_1, ...) = \bigcup\limits_{j = 1}^k \overline{ \bigcup\limits_{i \in \mathbb{Z}}  F^i(\Gamma(\tau_j(\xi_{i, j}), F\circ\tau_j(\xi_{i + 1, j})))},
    \end{equation*}
    where $\xi_i = (\xi_{i, 1}, ..., \xi_{i, k})$ and $\xi_{i, j} \in [0, 1]$. The point $\Phi(\xi) \in \hyp{K}(N^m)$ is indeed a connected set of $N^m$ because, for all $i \in \mathbb{Z}$, 
    \begin{equation*}
        F^i(\Gamma(\tau_j(\xi_{i, j}), F\circ\tau_j(\xi_{i + 1, j}))) \cap F^{i + 1}(\Gamma(\tau_j(\xi_{i + 1, j}), F\circ\tau_j(\xi_{i + 2, j}))) = \{F^{i + 1}\circ\tau_j(\xi_{i + 1, j})\},
    \end{equation*}
    and also $\lim_{i \to \infty} F^{i + 1}\circ\tau_j(\xi_{i + 1, j}) \to p$, and $\lim_{i \to -\infty} F^{i + 1}\circ\tau_j(\xi_{i + 1, j}) \to q$, since $F^{i + 1}\circ\tau_j(\xi_{i + 1, j}) \in F^{i + 1}(\gamma)$. Therefore, $\Phi(\xi)$ is a finite union of connected sets with a common point in both $p$ and $q$, hence a connected set.

    To illustrate, for each $j \in \{1, ..., k\}$, the curve $\overline{ \bigcup\limits_{i \in \mathbb{Z}}  F^i(\kappa(\tau_j(\xi_{i, j}), F\circ\tau_j(\xi_{i + 1, j})))}$ connect $q$ to $p$ and pass through the sequence $(F^i\circ\tau_j(\xi_{i, j}))_{i \in \mathbb{N}}$ in $N^m$, as represented in Figure \ref{fig:proof-cont-hyp-infty-mean-dim}.

    \begin{figure}[ht]
        \centering
        \begin{tikzpicture}[scale=0.8]
    \draw[line width=1.5pt, color=red] (7.5,2.5) to[in=50, out=260] (7.5,0.5);
    \node[scale = 1.2] at (8,0.5) {$\gamma_1$};
    \draw[line width=1.5pt, color=red] (7.5,-1) to[in=100, out=260] (7.5,-3);
    \node[scale = 1.2] at (7.9,-3) {$\gamma_2$};

    \draw[line width=1.5pt, color=red] (9.5,1.8) to[in=60, out=240] (9.5,0.5);
    \node[scale = 0.8] at (9.7,0.2) {$F(\gamma_1)$};
    \draw[line width=1.5pt, color=red] (9.5,-1) to[in=60, out=280] (9.5,-2.5);
    \node[scale = 0.8] at (9.5,-2.8) {$F(\gamma_2)$};

    \draw[line width=1.5pt, color=red] (5.5,2) to[in=50, out=260] (5.5,1);
    \node[scale = 0.8] at (5.5,0.6) {$F^{-1}(\gamma_1)$};
    \draw[line width=1.5pt, color=red] (6,-1) to[in=100, out=260] (6,-2);
    \node[scale = 0.8] at (6,-2.3) {$F^{-1}(\gamma_2)$};

    \draw[line width=1pt, color=blue] (0,0) to[in=230, out=30] (4,2.5);
    \draw[line width=1pt, color=blue] (4,2.5) to[in=170, out=290] (7.6,1.5);
    \draw[line width=1pt, color=blue] (7.6,1.5) to[in=200, out=0] (13,0.5);
    \draw[line width=1pt, color=blue] (13,0.5) to[in=200, out=0] (15,0);

    \draw[line width=1pt, color=blue] (0,0) to[in=230, out=30] (5,-1);
    \draw[line width=1pt, color=blue] (5,-1) to[in=230, out=0] (7.45,-1.5);
    \draw[line width=1pt, color=blue] (7.45,-1.5) to[in=230, out=0] (11,-1.5);
    \draw[line width=1pt, color=blue] (11,-1.5) to[in=240, out=350] (15,0);
    
    \fill (7.6,1.5) circle[radius=3pt];
    \node[scale = 1] at (8.4,1.9) {$\tau_1(\xi_{0, 1})$};
    \fill (7.45,-1.5) circle[radius=3pt];
    \node[scale = 1] at (8.3,-1.15) {$\tau_2(\xi_{0, 2})$};

    \fill (0,0) circle[radius=2pt];
    \node[below, scale = 1.2] at (0,-0.2) {$q$};
    \fill (15,0) circle[radius=2pt];
    \node[below, scale = 1.2] at (15,-0.2) {$p$};
\end{tikzpicture}
        \caption{The connected set in blue is an example of $\Phi(\xi)$ for $k = 2$.}
        \label{fig:proof-cont-hyp-infty-mean-dim}
    \end{figure}
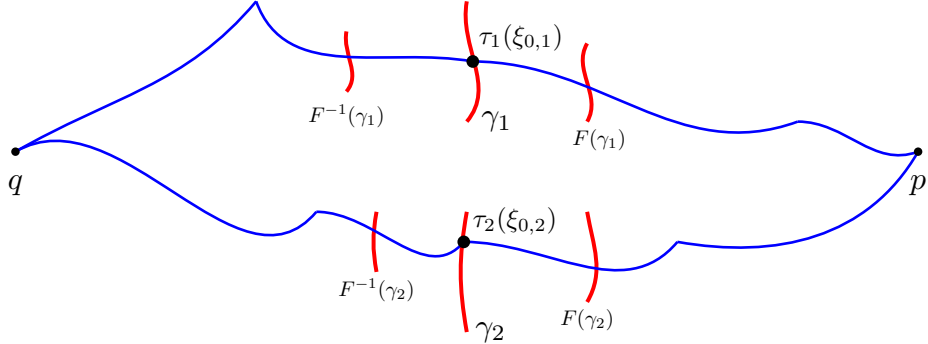
    
    Observe that
    \begin{equation*}
        \induce{F}\left(\overline{ \bigcup\limits_{i \in \mathbb{Z}}  F^i(\kappa(\tau_j(\xi_{i, j}), F\circ\tau_j(\xi_{i + 1, j})))}\right) = \overline{ \bigcup\limits_{i \in \mathbb{Z}}  F^{i + 1}(\kappa(\tau_j(\xi_{i, j}), F\circ\tau_j(\xi_{i + 1, j})))}.
    \end{equation*}
    Thus, $\induce{F}\circ\Phi(\xi) = \Phi\circ\sigma^{-1}(\xi)$.

    If $\Phi$ is a homeomorphism between $([0, 1]^k)^{\mathbb{Z}}$ and its image, then, since the mean dimension is a topological invariant,  $\text{mdim}(\hyp{C}(N^m), \induce{F}) \geq \text{mdim}(\Phi(([0, 1]^k)^{\mathbb{Z}}), \induce{F}) \geq k$, because $\text{mdim}(([0, 1]^k)^{\mathbb{Z}}, \sigma) = k$. Therefore, $\text{mdim}(\hyp{C}(N^m), \induce{F}) = \infty$, because $k \in \mathbb{N}$ is arbitrary. Thus, our task is reduced to prove that $\Phi$ is continuous and injective.

    To prove that $\Phi$ is injective, consider $\xi \neq \eta$ in $([0, 1]^k)^{\mathbb{Z}}$, hence, there is $i \in \mathbb{Z}$ such that $\xi_i \neq \eta_i$. Specifically, there is $j \in \{1, ..., k\}$ such that $\xi_{i, j} \neq \eta_{i, j}$. Therefore, $\Gamma(\tau_j(\xi_{i, j}), F\circ\tau_j(\xi_{i + 1, j})) \neq \Gamma(\tau_j(\eta_{i, j}), F\circ\tau_j(\eta_{i + 1, j}))$ as elements in $\hyp{C}(N^m)$. Since $F$ is, in particular, a homeomorphism, then $F^i(\Gamma(\tau_j(\xi_{i, j}), F\circ\tau_j(\xi_{i + 1, j}))) \neq F^i(\Gamma(\tau_j(\eta_{i, j}), F\circ\tau_j(\eta_{i + 1, j})))$. Therefore, the curves $\overline{ \bigcup\limits_{i \in \mathbb{Z}}  F^i(\Gamma(\tau_j(\xi_{i, j}), F\circ\tau_j(\xi_{i + 1, j})))}$ and $\overline{ \bigcup\limits_{i \in \mathbb{Z}}  F^i(\Gamma(\tau_j(\eta_{i, j}), F\circ\tau_j(\eta_{i + 1, j})))}$ are different elements of $\hyp{C}(N^m)$. Thus, $\Phi(\xi) \neq \Phi(\eta)$.

    To prove the continuity of $\Phi$, consider $(\xi^{(n)})_{n \in \mathbb{N}}$ converging to $\xi$ in $([0, 1]^k)^{\mathbb{Z}}$ with the product topology, that is, for each $i \in \mathbb{Z}$ and $j \in \{1, ..., k\}$, $\xi^{(n)}_{i,j} \to \xi_{i, j}$. The continuity of $\Gamma$ and $\tau_j$ guarantees that $\Gamma(\tau_j(\xi^{(n)}_{i,j}), F\circ\tau_j(\xi^{(n)}_{i + 1,j}))$ converges to $\Gamma(\tau_j(\xi_{i,j}), F\circ\tau_j(\xi_{i + 1,j}))$ as $n \to \infty$ and for each $j \in \{1, ..., k\}$. Then $F^i(\Gamma(\tau_j(\xi^{(n)}_{i,j}), F\circ\tau_j(\xi^{(n)}_{i + 1,j})))$ goes to $F^i(\Gamma(\tau_j(\xi_{i,j}), F\circ\tau_j(\xi_{i + 1,j})))$ for all $i \in \mathbb{Z}$. Therefore, $\Phi(\xi^{(n)}) \to \Phi(\xi)$.
\end{proof}

%%%%%%%%%%%%%%%%%%%%%%%%%%%%%%%%%%%%%%%%%%%%%%%%%%%%%%%%%
\subsection{Explosion on the circle}\label{subsec:explosion-circle}

The explosion of the mean dimension for the induced hyperspace map is better understood for homeomorphisms defined on one-dimensional topological manifolds because they all have zero topological entropy. Precisely, there is a dichotomy for such homeomorphisms: the mean dimension of its induced hyperspace map is zero or infinite. The following lemma states this dichotomy for interval homeomorphisms and is crucial to proving the analogous statement for circle homeomorphisms.\\

\begin{lemma}\label{lemma:t-square-not-identity}
    Let $T: [0, 1] \to [0, 1]$ be a homeomorphism such that $T^2$ is not the identity, then $\normalfont{\text{mdim}}(\hyp{K}([0, 1]), \induce{T}) = \infty$.
\end{lemma}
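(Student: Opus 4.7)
The plan is to reduce to Lemma \ref{lemma:hyp-infty-mean-dimension} by exhibiting a wandering arc for $T$ in the orientation-preserving case, and for $T^2$ in the orientation-reversing case, invoking in the latter the scaling property $\text{mdim}(X, T^n) = n \cdot \text{mdim}(X, T)$. Observe first that in the orientation-preserving case the hypothesis $T^2 \neq \text{Id}$ already forces $T \neq \text{Id}$: if $T$ were an increasing involution of $[0,1]$ with $T(x_0) > x_0$ for some $x_0$, then $T^2(x_0) > T(x_0) > x_0$ would contradict $T^2 = \text{Id}$.

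Assume then that $T$ is orientation-preserving and $T \neq \text{Id}$. The fixed-point set $\text{Fix}(T)$ is closed and proper, and each connected component of its complement is an open interval $(a,b)$ with $T(a) = a$ and $T(b) = b$ (including the case $a = 0$ or $b = 1$), so $T$ maps $[a,b]$ onto itself. On $(a,b)$ the continuous function $T(x) - x$ is nowhere zero, hence of constant sign; without loss of generality it is positive. Pick any $c \in (a,b)$ and any $d \in (c, T(c))$, and set $\gamma = [c,d] \subset (a,b)$. Since every iterate $T^n$ is strictly increasing on $[a,b]$, induction on $n \geq 0$ starting from $T(c) > d$ yields $T^{n+1}(c) > T^n(d)$; combined with $T^{n+1}(c) > T^n(c)$, this shows $T^n(\gamma) \cap T^m(\gamma) = \varnothing$ whenever $n > m \geq 0$. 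Pairwise disjointness for all $n \neq m$ in $\mathbb{Z}$ follows by applying a suitable power of $T^{-1}$ and reducing to the nonnegative case, so Lemma \ref{lemma:hyp-infty-mean-dimension} gives $\text{mdim}(\hyp{K}([0,1]), \induce{T}) = \infty$.

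If instead $T$ is orientation-reversing, then $T^2$ is orientation-preserving and, by hypothesis, not the identity, so the preceding argument applied to $T^2$ gives $\text{mdim}(\hyp{K}([0,1]), \induce{T^2}) = \infty$. Since $\induce{T^2} = (\induce{T})^2$, property (5) of the mean dimension then forces $2 \cdot \text{mdim}(\hyp{K}([0,1]), \induce{T}) = \infty$, whence the result. The main (still mild) technical point is verifying that the forward \emph{and} backward iterates of the selected arc $\gamma$ are mutually disjoint, for which the strict monotonicity of $T$ on a fixed-point-free component is decisive; once disjointness is secured, the wandering-arc mechanism of Lemma \ref{lemma:hyp-infty-mean-dimension} does the rest.
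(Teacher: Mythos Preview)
Your proof is correct and follows essentially the same approach as the paper: exhibit a wandering arc for an increasing iterate and invoke Lemma~\ref{lemma:hyp-infty-mean-dimension} together with the scaling property $\text{mdim}(X,T^n)=n\cdot\text{mdim}(X,T)$. The only structural difference is that the paper passes immediately to $T^2$ (which is increasing regardless of the orientation of $T$) and finds the wandering interval there, whereas you case-split and work with $T$ itself in the orientation-preserving case; your observation that an increasing $T$ with $T^2=\text{Id}$ must be the identity is true but unnecessary, since $T^2\neq\text{Id}$ trivially implies $T\neq\text{Id}$.
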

\begin{proof}
    The first step is to prove that $T^2$ is increasing. In fact, since $T$ is a homeomorphism, $T$ is a strictly increasing or decreasing function. If $T$ is increasing, then it is obvious. If $T$ is decreasing, then $Ta < Tb$, when $a > b$ in $[0, 1]$. Therefore, $T^2a > T^2b$. 

    Since $T^2$ is not the identity, then $T^2x < x$ or $T^2x > x$, for some $x \in [0, 1]$. Suppose, without loss of generality, that $T^2x < x$. Hence $T^{2n_2}x < T^{2n_1}x$ for $n_2 > n_1$ in $\mathbb{Z}$.

    Notice that $(T^{2n}x)_{n \geq 0}$ is a decreasing sequence, then $T^{2n}x$ converges to a fixed point of $T^2$. The same happens to $(T^{2k}x)_{k \leq 0}$, because $T^{2k}x$ is an increasing sequence. Recall that $T^2$ has at least two fixed points because $T^2$ is increasing.

    The second step is to choose a closed interval $J$ contained in the interval $(T^2x, x)$, therefore $T^{2n_2}(J) \cap T^{2n_1}(J) = \varnothing$ for $n_2 > n_1$ in $\mathbb{Z}$, and $\lim_{n \to \pm\infty}T^{2n}(J)$ is a fixed point of $T^2$. Since $J$ is also a wandering interval, just as in the proof of Theorem \ref{thm:nonwandering-infty-mean-dim}, we have $\text{mdim}(\hyp{K}([0, 1]), \induce{T}^2) = \text{mdim}(\hyp{K}([0, 1]), \induce{T}) = \infty$.
\end{proof}

\begin{remark}\label{rmrk:dychotomy-mean-dim-unit-interval}
    If $T^2: [0,1] \to [0, 1]$ is the identity, then $\normalfont{\text{mdim}}(\hyp{K}([0, 1]), \induce{T}^2) = \normalfont{\text{mdim}}(\hyp{K}([0, 1]), \induce{T}) = 0$. This proves the dichotomy for homeomorphisms defined in the unit interval $[0, 1]$.\\
\end{remark}

Before proceeding, we would like to point out the following remark about the hyperspace map of isometries.\\

\begin{remark}\label{prop:isometry-hyp}
    If $R: X \to X$ is an isometry, then the induced map $\induce{R}: \hyp{K}(X) \to \hyp{K}(X)$ is also an isometry. In fact, since $R$ is an isometry, for every $x_1, x_2 \in X$ we have $d(Rx_1, Rx_2) = d(x_1, x_2)$. Now let $A,B \subset X$ be closed sets such that $d_H(A,B) = \varepsilon$. Because $A \subset B_\varepsilon$, it follows that $R(A) \subset R(B)_\varepsilon$. If $\varepsilon_0 \in (0,\varepsilon)$, then $A \not\subset B_{\varepsilon_0}$, so there exist $a \in A$ and $b \in B$ with $d(a,b) > \varepsilon_0$. Thus $d(Ra,Rb) = d(a,b) > \varepsilon_0$, which shows that $R(A) \not\subset R(B)_{\varepsilon_0}$. Therefore no change occurs in the Hausdorff distance, and we conclude that  $d_H(\induce{R}(A),\induce{R}(B)) = d_H(A,B)$.\\
\end{remark}

To prove the explosion phenomenon on the circle $S^1$, we will refer to the consequences of the rotation number theory of circle homeomorphisms. A great exposition of this subject is given in \cite{defaria_guarino_2022}. Recall that two dynamical systems $T: X \to X$ and $F: Y \to Y$ are \textit{topologically conjugated} if there is a homeomorphism $\psi: X \to Y$ such that $\psi\circ F = T\circ \psi$.

\begin{proof}[Proof of Theorem \emph{\ref{thm:dichotomy-circle-homeo}}]
    Suppose that $H$ is a circle homeomorphism with a periodic point, then there exists $x \in S^1$ and $q \in \mathbb{N}$ such that $H^qx = x$. Every $z \in S^1$ can be written as $z = x\cdot e^{2\pi it}$ for $t \in [0, 1]$. The function $\psi: S^1 \to [0, 1]$ defined as $\psi(z) = t$ conjugates $H^q$ to a homeomorphism $T$ on $[0, 1]$ given by $T = \psi\circ H^q\circ \psi^{-1}$. If $T^2$ is the identity, then $H^{2q}$ is the identity. Thus, $\text{mdim}(\hyp{K}(S^1), \induce{H}) = 0$. Otherwise, by Lemma \ref{lemma:t-square-not-identity}, there is a $J \subset [0, 1]$ wandering interval for $T^2$, then $\psi^{-1}(J)$ is a wandering interval for $H^{2q}$. Therefore, by Theorem \ref{thm:nonwandering-infty-mean-dim}, $\text{mdim}(\hyp{K}(S^1), \induce{H}) = \infty$. 

    The last step is to consider that $H$ has no periodic points. If $H$ is conjugated to an irrational rotation $R_\theta$ by the homeomorphism $\phi$, then it is not hard to show that $\induce{H}$ is conjugated to $\induce{(R_\theta)}$ by the homeomorphism $\induce{\phi}$. Since $\induce{(R_\theta)}$ is an isometry, by Remark \ref{prop:isometry-hyp}, and the topological entropy is preserved by conjugation, the entropy of $\induce{H}$ is zero. Therefore, $\text{mdim}(\hyp{K}(S^1), \induce{H}) = 0$. Otherwise, the non-wandering set of $H$, $\Omega(H)$, is a Cantor set \cite[Proposition 2.5]{defaria_guarino_2022}. In this case, by the proof of Theorem \ref{thm:nonwandering-infty-mean-dim}, it is well known that there is a closed wandering interval $J \subset S^1\setminus \Omega(H)$, that is, its images by $H$ are pairwise disjoint, such that $\abs{H^n(J)} \to 0$ and $H^n(J) \to \Omega(H)$ when $\abs{n} \to \infty$. Then, $\text{mdim}(\hyp{K}(S^1), \induce{H}) = \infty$.
\end{proof}

Given $H: S^1 \to S^1$ an orientation-preserving homeomorphism, if $H$ is conjugated to a rotation, then $H$ is an isometry with respect to some metric compatible with the usual topology on $S^1$. Therefore, the metric mean dimension of its induced hyperspace map is zero. Conversely, if $\text{mdim}(\hyp{K}(S^1), \induce{H}) = 0$, we have two cases:
\begin{itemize}
    \item if $H$ has no periodic points, then $H$ is conjugated to an irrational rotation, by Theorem \ref{thm:dichotomy-circle-homeo};
    \item if $H$ has a periodic point of period $q \in \mathbb{N}$, then all periodic orbits of $H$ have period $q$ \cite[Proposition 2.4]{defaria_guarino_2022}. Suppose that $H^q$ is not the identity; then $T := 
    \psi\circ H^q\circ \psi^{-1}$, as in the proof of Theorem \ref{thm:dichotomy-circle-homeo}, is not the identity. By hypothesis, $T^2$ is the identity, hence $H^{2q}$ is the identity. This contradicts the period of its periodic orbits. Thus, $H^q$ must be the identity. In this case, it is well known that $H$ is topologically conjugated to a rational rotation \cite{constantin_kolev_1994}.
\end{itemize}

As a consequence, this discussion proves Corollary \ref{coro:rotation-number-mean-dimension}.\\

\begin{remark}
    The orientation-preserving condition on Corollary \emph{\ref{coro:rotation-number-mean-dimension}} is necessary because if $H$ is a reflection, then $\normalfont{\text{mdim}}(\hyp{K}(S^1), \induce{H}) = 0$ because $H^2$ is the identity. However, $H$ has two fixed points and is not conjugated to the identity. Therefore, $H$ is not conjugated to a rotation.
\end{remark}

%%%%%%%%%%%%%%%%%%%%%%%%%%%%%%%%%%%%%%%%%%%%%%%%%%%%%%%%%

\subsection{Differences between mean dimension and topological dimension}\label{subsec:difference-mean-top-dim}

As discussed above, the mean dimension is the dynamical analog of the topological dimension. The reader may wonder if results from classical dimension theory also hold for the mean dimension. In the context of hyperspaces, for instance, a classical result states that for any $T_1$-space\footnote{A \emph{$T_1$ space} is a topological space in which, for every pair of distinct points, each has a neighborhood not containing the other point.} $X$, we have $\dim(X) = 0$ if and only if $\dim(\hyp{K}(X)) = 0$ \cite[Proposition 8.6]{illanes_nadler_1999}.

Surprisingly, this result's analog does not hold for the mean dimension. Indeed, as a consequence of our previous results, there exists a dynamical system $(X, T)$ on a finite-dimensional phase space (hence $\text{mdim}(X, T) = 0$) such that $\text{mdim}(\hyp{K}(X), \induce{T}) = \infty$. However, the converse is true, and this is an easy consequence of a property of mean dimension.\\

\begin{proposition}
    If $(X, d)$ is a compact metric space, $T: X \to X$ is a continuous function, and $\emph{mdim}(\hyp{K}(X), \induce{T}) = 0$, then $\emph{mdim}(X, T) = 0$.
\end{proposition}
\begin{proof}
    Consider $\Psi: X \to \hyp{K}(X)$ given by $\Psi(x) = \{x\}$. Then $\Psi$ is injective and continuous. Furthermore, $\Psi$ is a topological conjugation, that is, 
    \begin{equation*}
        \Psi\circ T(x) = \{Tx\} = \induce{T}(\{x\}) = \induce{T}\circ\Psi(x).
    \end{equation*}
    Therefore, $0 = \text{mdim}(\hyp{K}(X), \induce{T}) \geq \text{mdim}(X, T)$.
\end{proof}

In other words, since $(X, T)$ is always a factor of $(\hyp{K}(X), \induce{T})$, the above Proposition is a direct consequence of the following fact:\\

\begin{lemma}
    Given $(X, d)$ a compact metric space and $T: X \to X$ a continuous map, $\emph{mdim}(\hyp{K}(X), \induce{T}) \geq \emph{mdim}(X, T)$.
\end{lemma}

%%%%%%%%%%%%%%%%%%%%%%%%%%%%%%%%%%%%%%%%%%%%%%%%%%%%%%%%%

\section{Metric mean dimension of induced maps}\label{sec:metric-mean-dim-induced}

For an arbitrary map, $T: X \to X$, the metric mean dimension offers a viable alternative when it is not yet possible to calculate the mean dimension of its induced hyperspace map. The calculation of the metric mean dimension is simpler because it relies on how the dynamics separates points in the phase space.

To compute this dimension in the hyperspace $\hyp{K}(X)$, we must first define a dynamical distance based on the Hausdorff metric $d_H$. Precisely, for $n \in \mathbb{N}$ and $A, B \in \hyp{K}(X)$, the \emph{dynamical Hausdorff metric} for $\induce{T}: \hyp{K}(X) \to \hyp{K}(X)$ is defined as
\begin{equation*}
    \mathcal{H}_n(A, B) = \max\{ d_H(\induce{T}^iA, \induce{T}^iB); 0 \leq i \leq n - 1\},
\end{equation*}
and for $\varepsilon > 0$, the \emph{Hausdorff $(n, \varepsilon)$-dynamical ball} around $A \in \hyp{K}(X)$ is defined as
\begin{equation*}
    \mathbf{B}^H_{(n, \varepsilon)}(A) = \{B \in \hyp{K}(X); \mathcal{H}_n(A, B) < \varepsilon\}.
\end{equation*}

Before showing some properties of the dynamical Hausdorff metric, we need to establish certain topological properties of locally connected continuum spaces. We begin with the following well-known result.\\

\begin{proposition}[\cite{illanes_nadler_1999}, Proposition 10.4]\label{prop:loc-conn-isometric-arc}
    Let $X$ be a locally connected continuum with a convex metric $d$. Then any two points, $x, y \in X$, can be joined by an arc $\gamma$ in $X$ such that $\gamma$ is isometric to the closed interval $[0, d(x, y)]$.\\
\end{proposition}

As a consequence of the above proposition, we obtain the expected result regarding the strict inclusion of open sets when there is a strict inequality between their diameters.\\

\begin{lemma}
    Let $(X, d)$ be a locally connected continuum where $d$ is a convex metric. For sufficiently small $\varepsilon > 0$, for all $0 < \varepsilon_0 < \varepsilon$ and for any $x \in X$, $B_{\varepsilon_0}(x) \subsetneq B_\varepsilon(x)$.
\end{lemma}
\begin{proof}
    Let $\varepsilon > 0$ be such that $2\varepsilon < \text{diam}(X)$. Hence, for any $x \in X$, there is $y \in X$ such that $y \notin B_\varepsilon(x)$. Since $X$ is a locally connected continuum, there is an arc $\gamma$ from $x$ to $y$ such that $\gamma$ is isometric to the interval $[0, d(x, y)]$. Hence, given $\varepsilon_0 < \varepsilon$, there is $z \in \gamma$ such that $\varepsilon_0 < d(x, z) < \varepsilon$. Therefore, $z \in B_\varepsilon(x)$ and $z \notin B_{\varepsilon_0}(x)$.
\end{proof}

\begin{remark}\label{remark:isometric-curve-contained-open-ball}
    With the same hypotheses as the above lemma, if $y \in B_\varepsilon(x)$ and $\gamma$ is an arc connecting $x$ to $y$ such that it is isometric to the interval $[0, d(x, y)]$, then $d(x, z) < \varepsilon$ for all $z \in \gamma$. Indeed, let $\psi: \gamma \to [0, d(x, y)]$ be an isometry. Since $y \in B_\varepsilon(x)$, we know that $d(x, y) < \varepsilon$. For any $z \in \gamma$, the distance $d(x, z)$ corresponds to the length of the image of the sub-arc $\gamma_{[x, z]}$ between $x$ and $z$ under $\psi$. This length cannot exceed the length of the entire image interval, so $d(x,z) \leq d(x,y) < \varepsilon$. Therefore, $\gamma \subset B_\varepsilon(x)$.\\
\end{remark}

Given $x \in X$, recall that $B_{(n, \varepsilon)}(x)$ is the $(n, \varepsilon)$-dynamicall ball. For $A \in \hyp{K}(X)$, define the open set $\mathfrak{B}_{(n, \varepsilon)}(A)$ of $\hyp{K}(X)$ as
\begin{equation*}
    \mathfrak{B}_{(n, \varepsilon)}(A) = \{C \in \hyp{K}(X); C \subset \bigcup\limits_{x \in A} B_{(n, \varepsilon)}(x) \text{ and } C \cap B_{(n, \varepsilon)}(x) \neq \varnothing \text{ for all } x \in A\}.
\end{equation*}
It is an open set because $B_{(n, \varepsilon)}(x)$ is an open set of $X$, hence $\mathfrak{B}_{(n, \varepsilon)}(A)$ is an open set in the Vietoris topology of the hyperspace, defined in \S\ref{section:hyperspaces-induced-maps}.\\

\begin{lemma}\label{lemma:dynamical-balls-hyperspace-subset}
    Let $(X, d)$ be a locally connected continuum where $d$ is a convex metric. For $A \in \hyp{K}(X)$, $n \in \mathbb{N}$ and sufficiently small $\varepsilon > 0$, $\mathfrak{B}_{(n, \varepsilon)}(A) \subsetneq \emph{\textbf{B}}^H_{(n, \varepsilon)}(A)$.
\end{lemma}
\begin{proof}
    As a consequence of the first hypothesis, for all $i \in \{0, ..., n - 1\}$,
    \begin{equation*}
        T^i(C) \subset \bigcup\limits_{x \in A} B_\varepsilon(T^ix).
    \end{equation*}
    Fix an index $i$ and assume, in contradiction, that for every $\varepsilon_0 < \varepsilon$ the above inclusion is not true. That is, for every $\varepsilon_0 < \varepsilon$, there exists $y_0 \in T^i(C)$ such that $y_0 \notin \bigcup_{x \in A} B_{\varepsilon_0}(T^ix)$. Let $(\varepsilon_n)_{n = 1}^\infty$ be an increasing sequence such that $\varepsilon_n \to \varepsilon$. Then, there exists a sequence $(y_n)_{n = 1}^\infty$ such that each $y_n \in T^i(C)$ and $y_n \notin \bigcup_{x \in A} B_{\varepsilon_n}(T^ix)$. Since $T^i(C)$ is a closed set, there exists a subsequence $y_{n_k}$ such that $y_{n_k} \to y$ and $y \in T^i(C)$. Hence, there is $z \in A$ such that $y \in B_\varepsilon(T^iz)$. Let $\eta > 0$ be such that $\eta < \varepsilon$ and $y \in B_\eta(T^iz)$. Since $(X, d)$ is a locally connected continuum with a convex metric $d$, $B_\eta(T^iz) \subsetneq B_\varepsilon(T^iz)$. Let $N \in \mathbb{N}$ be such that for all $k \geq N$, $\varepsilon_{n_k} > \eta$. This implies that $y_{n_k} \notin B_\eta(T^iz)$ for all $k \geq N$. Contradiction with the fact that $y_{n_k} \to y$. Thus, for each $i \in \{0, ..., n-1\}$, there exists $\delta_i \in (0, \varepsilon)$ such that
    \begin{equation*}
        T^i(C) \subset \bigcup\limits_{x \in A} B_{\delta_i}(T^ix).
    \end{equation*}
    Set $\delta = \max_{0 \leq i \leq n - 1} \delta_i$. Note that $\delta \in (0, \varepsilon)$. Hence, $T^i(C) \subset \bigcup_{x \in A} B_\delta(T^ix)$ for all $i \in \{0, ..., n - 1\}$. Moreover, fix and index $i$ and suppose, in contradiction, that for every $\varepsilon_0 < \varepsilon$ there exists $x_0 \in A$ such that $T^i(C) \cap B_{\varepsilon_0}(T^ix_0) = \varnothing$. If $(\varepsilon_n)_{n = 1}^\infty$ is an increasing sequence such that $\varepsilon_n \to \varepsilon$, then there exists a sequence $(x_n)_{n = 1}^\infty$ such that $x_n \in A$ and $T^i(C) \cap B_{\varepsilon_n}(T^ix_n) = \varnothing$. Since $A$ is a closed set, there is a subsequence $x_{n_k}$ such that $x_{n_k} \to x$ and $x \in A$. By the hypothesis, $T^i(C) \cap B_\varepsilon(T^ix) \neq \varnothing$. For a given $z \in T^i(C) \cap B_\varepsilon(T^ix)$, there exists $\eta \in (0, \varepsilon)$ such that $z \in B_\eta(T^ix)$. Let $N \in \mathbb{N}$ be such that for all $k \geq N$, $d(x_{n_k}, x) < \varepsilon_{n_k} - \eta$. This is possible because $\varepsilon_{n_k} \to \varepsilon$ and $x_{n_k} \to x$. Hence, 
    \begin{equation*}
        d(z, x_{n_k}) \leq d(z, x) + d(x_{n_k}, x) < \eta + d(x_{n_k}, x) < \varepsilon_{n_k}.
    \end{equation*}
    Contradiction with the fact that $T^i(C) \cap B_{\varepsilon_{n_k}}(T^ix_{n_k}) = \varnothing$. Therefore, for all $i \in \{0, ..., n - 1\}$, there exists $\eta_i \in (0, \varepsilon)$ such that for all $x \in A$, $T^i(C) \cap B_{\eta_i}(x) \neq \varnothing$. Let $\theta_i = \max\{\delta_i, \eta_i\}$, then
    \begin{equation*}
        T^i(C) \subset \bigcup\limits_{x \in A} B_{\theta_i}(T^ix) \text { and } T^i(C) \cap B_{\theta_i}(x) \neq \varnothing,
    \end{equation*}
    for all $x \in A$. Furthermore, for $\theta = \max_{0 \leq i \leq n - 1} \theta_i$, then, for all $i \in \{0, ..., n - 1\}$ and for any $x \in A$, 
    \begin{equation*}
        T^i(C) \subset \bigcup\limits_{x \in A} B_{\theta}(T^ix) \text { and } T^i(C) \cap B_{\theta}(x) \neq \varnothing.
    \end{equation*}
    In other words, $\induce{T}^iC \subset (\induce{T}^iA)_\theta$ and $\induce{T}^iA \subset (\induce{T}^iC)_\theta$. Since $\theta \in (0, \varepsilon)$, for all $i \in \{0, ..., n - 1\}$, $d_H(\induce{T}^iC, \induce{T}^iA) < \varepsilon$. Finally, $\mathcal{H}_n(A, C) < \varepsilon$.
\end{proof}

We will provide an example to show that the set inclusion in the statement of the above lemma is not an equality.\\

\begin{example}
    Consider the circle $S^1$ and choose two distinct points $p$ and $q$ in $S^1$. Denote the two distinct arcs that connect $p$ to $q$ as $\gamma_p$ and $\gamma_q$. Construct a homeomorphism $H$ on $S^1$ such that $p$ and $q$ are fixed points, $\gamma_p$ is the stable manifold of $p$, and $\gamma_q$ is the stable manifold of $q$. Given a sufficiently small $\varepsilon > 0$, consider $x \in \gamma_q$ such that $d(x, p) < \varepsilon$, and consider $y \in \gamma_p$ such that $d(q, y) < \varepsilon$, where $d$ is a convex metric for the circle. Hence $d_H(\{x, y\}, \{p, q\}) < \varepsilon$. Since $x \in \gamma_q$, we can assume that $d(Hx, q) < \varepsilon$ and since $y \in \gamma_p$, it is possible to consider that $d(Hy, p) < \varepsilon$. For sufficiently small $\varepsilon > 0$, this implies that $d(Hx, p) \geq \varepsilon$. Thus, $x \notin B_{(2, \varepsilon)}(p)$, the $(2, \varepsilon)$-dynamical ball around $p$. With a similar argument, $y \notin B_{(2, \varepsilon)}(q)$. Therefore, $\{x, y\} \notin \mathfrak{B}_{(2, \varepsilon)}(\{p, q\})$. On the other hand, $d_H(\{Hx, Hy\}, \{p, q\}) < \varepsilon$. That is, $\{x, y\} \in \mathbf{B}^H_{(2, \varepsilon)}(\{p, q\})$.\\
\end{example}

In simpler terms, for $A, C \in \hyp{K}(X)$, the Lemma \ref{lemma:dynamical-balls-hyperspace-subset} shows that if we consider the metric $\mathcal{H}^n$ on $\hyp{K}(X)$, defined by 
\begin{equation*}
    \mathcal{H}^n(A, C) = \inf\{\varepsilon > 0; C \subset \bigcup\limits_{x \in A} B_{(n, \varepsilon)}(x) \text{ and } A \subset \bigcup\limits_{y \in C} B_{(n, \varepsilon)}(y) \},
\end{equation*}
then $\mathcal{H}_n(A, C) \leq \mathcal{H}^n(A, C)$.\\

\begin{lemma}\label{lemma:span-hyperspace-bounded-exponential-span-base}
    Let $(X, d)$ be a locally connected continuum where $d$ is a convex metric, and consider $T: X \to X$ a map. For sufficiently small $\varepsilon > 0$ and $n \in \mathbb{N}$,
    \begin{equation*}
        \emph{Span}(\induce{T}, n, \varepsilon) \leq 2^{\emph{Span}(T, n, \varepsilon)} - 1.
    \end{equation*}
\end{lemma}
\begin{proof}
    Let $E = \{p_1, ..., p_m\}$ be a $(n, \varepsilon)$-spanning set of minimal cardinality for $T$. That is, $m = \text{Span}(T, n, \varepsilon)$. For any closed nonempty set $A \subset X$, there exists $E_0 \subset E$ such that 
    \begin{equation*}
        A \subset \bigcup\limits_{x \in E_0} B_{(n, \varepsilon)}(x) \text{ and } A \cap B_{(n, \varepsilon)}(x) \neq \varnothing,
    \end{equation*}
    for all $x \in E_0$. Hence, by Lemma \ref{lemma:dynamical-balls-hyperspace-subset}, $\mathcal{H}_n(A, E_0) < \varepsilon$. Therefore, there exists at most $2^m - 1$ elements for the $(n, \varepsilon)$-spanning set of $\induce{T}$. 
\end{proof}

The next result is a direct consequence of the above lemma. It states that if the complexity of $T$, measured in terms of polynomial entropy, is sufficiently low, then the topological entropy of $T$ is zero.\\

\begin{lemma}\label{lemma:positive-entropy-induced-hyperspace-poly-entropy-one}
    Consider $X$ a locally connected continuum endowed with a convex metric $d$ and $T: X \to X$ a map. If $h(\induce{T}) > 0$, then $h_{\emph{pol}}(T) \geq 1$.
\end{lemma}
\begin{proof}
    Let $\alpha > 0$ be such that $h(\induce{T}) > \alpha$ and $\varepsilon_0 > 0$ be such that for all $\varepsilon \in (0, \varepsilon_0)$,
    \begin{equation*}
        \widetilde{h}(\induce{T}, \varepsilon) > \alpha. 
    \end{equation*}
    In other words, 
    \begin{equation*}
        \limsup\limits_{n \to \infty} \frac{\log \text{Span}(\induce{T}, n, \varepsilon)}{n} > \alpha.
    \end{equation*}
    This implies that there exists sequence $(n_j)_{j \in \mathbb{N}}$ such that for all $j \in \mathbb{N}$
    \begin{equation*}
        \frac{\log \text{Span}(\induce{T}, n_j, \varepsilon)}{n_j} > \alpha.
    \end{equation*}
    By Lemma \ref{lemma:span-hyperspace-bounded-exponential-span-base}, 
    \begin{equation*}
        \text{Span}(T, n_j, \varepsilon)\cdot\log 2 \geq \log \text{Span}(\induce{T}, n_j, \varepsilon) > n_j\cdot\alpha.
    \end{equation*}
    Note that $n_j \cdot \alpha > 0$, because both $n_j$ and $\alpha$ are positive real numbers. Thus, taking the $\log$ on both sides of the above inequality,
    \begin{equation*}
        \log \text{Span}(T, n_j, \varepsilon) + \log\log 2 > \log n_j + \log\alpha.
    \end{equation*}
    We can assume that $n_j \geq 2$, for all $j \in \mathbb{N}$. Dividing by $\log n_j$ on both sides,
    \begin{equation*}
        \frac{\log \text{Span}(T, n_j, \varepsilon)}{\log n_j} + \frac{\log\log 2}{\log n_j} > 1 + \frac{\log\alpha}{\log n_j}.
    \end{equation*}
    This implies
    \begin{equation*}
        \limsup\limits_{n \to \infty} \frac{\log \text{Span}(T, n, \varepsilon)}{\log n} \geq 1.
    \end{equation*}
    Therefore,
    \begin{equation*}
        h_{\text{pol}}(T) = \lim\limits_{\varepsilon \to 0}\limsup\limits_{n \to \infty} \frac{\log \text{Span}(T, n, \varepsilon)}{\log n} \geq 1.
    \end{equation*}
\end{proof}

Thus, by Lemma \ref{lemma:positive-entropy-induced-hyperspace-poly-entropy-one}, if $h_{\text{pol}}(T) < 1$, then $h(\induce{T}) = 0$. Therefore, by Equation \ref{eq:mean-metric-mean-inequality},
\begin{equation*}
    0 = h(\induce{T}) \geq \text{mdim}_M(\hyp{K}(X), d_H, \induce{T}) \geq \text{mdim}(\hyp{K}(X), \induce{T}).
\end{equation*}
The above inequality shows Theorem \ref{thm:poly-entropy-less1-metric-mean-dim-zero}. Hence, if the complexity of the base map $T$ is sufficiently low, then the phenomenon of mean dimension explosion does not occur for the induced map $\induce{T}$.

Another way to guarantee that the metric mean dimension of the induced hyperspace map is zero is for the base map to be an isometry. In fact, by Remark \ref{prop:isometry-hyp}, if $T$ is an isometry, then $\induce{T}$ is also an isometry. This implies that $h(\induce{T}) = 0$, and again by the above inequality, $\text{mdim}_M(\hyp{K}(X), d_H, \induce{T}) = 0$.

For the following, consider $\varepsilon > 0$ and let $N_\varepsilon(X)$ be the smallest number of open sets of diameter at most $\varepsilon$ which can cover $X$. Observe that $N_\varepsilon(X)$ depends on the metric $d$ used to compute the diameter of the open sets.\\

\begin{lemma}\label{lemma:number-cover-epsilon-hyperspace-exponential-upper-bound}
    If $(X, d)$ is a compact metric space and $\varepsilon > 0$, then
    \begin{equation}\label{eq:upper-bound-number-cover-hyperspace}
        N_\varepsilon(\hyp{K}(X)) \leq 2^{N_\varepsilon(X)} - 1.
    \end{equation}
\end{lemma}
\begin{proof}
    Consider a cover $U_1, \ldots, U_m$ of $X$ such that $m = N_\varepsilon(X)$ and $\text{diam}(U_i) < \varepsilon$ for all $1 \leq i \leq m$. Since the topology of $\hyp{K}(X)$ is given by the Vietoris topology, 
    \begin{equation*}
        V_{i_1, \ldots, i_k} = \{A \in \hyp{K}(X); A \subset \bigcup_{j = 1}^k U_{i_j} \text{ and } A \cap U_{i_j} \neq \varnothing \text{ for all } 1 \leq j \leq k\}
    \end{equation*}
    forms an open cover of $\hyp{K}(X)$. Indeed, for any $A \in \hyp{K}(X)$ there exists a subcover $U_{i_1}, \ldots, U_{i_\ell}$ such that $A \subset \bigcup_{j = 1}^\ell U_{i_j}$. We may also assume that for any strict subset $P \subsetneq \{i_1, \ldots, i_\ell\}$, $A \nsubseteq \bigcup_{p \in P} U_{p}$. This implies $A \cap U_{i_j} \neq \varnothing$ for all $1 \leq j \leq \ell$. Hence $A \in V_{i_1, \ldots, i_\ell}$. Moreover, choose $A, B \in V_{i_1, \ldots, i_k}$. For each $1 \leq j \leq k$, take $x \in A \cap U_{i_j}$ and $y \in B \cap U_{i_j}$. Then $d(x, y) \leq \varepsilon$. Since this holds for all $j \in \{1, \ldots, k\}$, it follows that $A \subset B_\varepsilon$ and $B \subset A_\varepsilon$. Thus $d_H(A, B) \leq \varepsilon$, so $\text{diam}(V_{i_1, \ldots, i_k}) \leq \varepsilon$. Observe that there are $2^m - 1$ open sets of the form $V_{i_1, \ldots, i_k}$. Therefore, $N_\varepsilon(\hyp{K}(X)) \leq 2^m - 1$. 
\end{proof}

\begin{remark}\label{rmk:separated-sets-hyperspace}
In the same spirit as the above lemma, if $E \subset X$ is a finite set such that for any distinct $x, y \in E$, we have $d(x, y) \geq \varepsilon$, then for any distinct nonempty subsets $A, B \subset E$, we have $d_H(A, B) \geq \varepsilon$. Indeed, since $A \neq B$, assume without loss of generality that there is an element $x \in A \setminus B$. By hypothesis, $d(x,y) \geq \varepsilon$ for all $y \in B$, which implies $d_H(A,B) \geq \varepsilon$. To conclude, note that there exist at least $2^{\#E} - 1$ sets which are $\varepsilon$-separated by $d_H$.\\
\end{remark}

Recall that in \S\ref{subsec:difference-mean-top-dim} we discussed that the topological dimension of $\hyp{K}(X)$ is zero whenever $X$ is a zero-dimensional $T_1$-space. This does not necessarily imply that other concepts of dimension for a topological space must also be zero. Nevertheless, we construct a zero-dimensional compact metric space such that the box-counting dimension of its hyperspace is also zero. For a compact metric space $(X, d)$, the \emph{box-counting dimension} of $X$ is defined as
\begin{equation*}
    \dim_B(X, d) = \lim_{\varepsilon \to 0} \frac{\log N_\varepsilon(X)}{-\log \varepsilon},
\end{equation*}
whenever this limit exists.\\

\begin{lemma}\label{lemma:cantor-set-hyperspace-zero-box-dimension}
    Given the usual Euclidean metric $\rho$ on $[0, 1]$, there exists a Cantor set $\Lambda \subset [0, 1]$ such that $\dim_B(\hyp{K}(\Lambda), \rho_H) = 0$.
\end{lemma}
\begin{proof}
    For $k \in \mathbb{N}$, $k \geq 2$, consider the following construction of a Cantor set: in the first step, remove an open interval $J_{1, 1} \subset [0, 1]$ such that $J_{1, 1}^\complement \cap [0, 1] = I_{1, 0} \cup I_{1, 1}$, where $\text{diam}(I_{1, 0}) = \text{diam}(I_{1, 1}) = 2^{-4}$, and call $I_{1, 0} \cup I_{1, 1} = \Lambda_1$. In step $k$, remove the open intervals $J_{k, 1}, ..., J_{k, 2^{k - 1}}$ from $\Lambda_{k - 1}$ such that
    \begin{equation*}
        J_{k, 1}^\complement \cap ... \cap J_{k, 2^{k - 1}}^\complement \cap \Lambda_{k - 1} = I_{k, 1} \cup ... \cup I_{k, 2^k} = \Lambda_k,
    \end{equation*}
    where $\text{diam}(I_{k, j}) = 2^{-(4^k)}$, for $1 \leq j \leq 2^k$. The Cantor set $\Lambda$ is given by $\Lambda = \lim_{k \to \infty} \Lambda_k$. To show that the box-counting dimension of $\hyp{K}(\Lambda)$ is equal to $0$, observe that the minimum number of open sets of diameter at most $2^{-(4^k)}$ required to cover $\Lambda$ is $2^{k+1}$. By Equation (\ref{eq:upper-bound-number-cover-hyperspace}) in Lemma \ref{lemma:number-cover-epsilon-hyperspace-exponential-upper-bound}, 
    \begin{equation*}
        N_k(\hyp{K}(\Lambda)) \leq 2^{N_k(\Lambda)} - 1 \leq 2^{2^{k + 1}},
    \end{equation*}
    where $N_k$ is the smallest number of open sets of diameter at most $2^{-(4^k)}$. Therefore,
    \begin{equation*}
        \dim_B(\hyp{K}(\Lambda), \rho_H) \leq \lim_{k \to \infty} \frac{\log 2^{2^{k + 1}}}{-\log 2^{-(4^k)}} = \lim_{k \to \infty} \frac{2^{k + 1}}{4^k} = 0. 
    \end{equation*}
\end{proof}

The box-counting dimension is strongly connected to the metric mean dimension for any map $T$: it provides an upper bound. More precisely, we have the inequality
\begin{equation}\label{eq:box-counting-bounds-metric-mean-dim}
    \text{mdim}_M(X, d, T) \leq \dim_B(X, d)
\end{equation}
whenever both dimensions exist, which is a specific case of a more general result. Proved in Remark 4 of \cite{velozo_velozo_2017}, this inequality, along with the above lemma, allows us to construct an example of a map whose topological entropy is positive, but whose induced hyperspace map has a metric mean dimension of zero.

\begin{proof}[Proof of Theorem \emph{\ref{thm:existence-compact-space-pos-entropy-zero-metric-mean-dim}}]
    Consider the Cantor set $\Lambda \subset [0, 1]$ from Lemma \ref{lemma:cantor-set-hyperspace-zero-box-dimension}. It is a compact metric space endowed with the Euclidean metric $\rho$. Given the one-sided symbolic space $\Sigma_2$, there exists a homeomorphism between $\Lambda$ and $\Sigma_2$. Hence, there exists a continuous map $T: \Lambda \to \Lambda$ that is conjugate to the one-sided shift $\sigma: \Sigma_2 \to \Sigma_2$. Indeed, if $\Psi: \Lambda \to \Sigma_2$ is a homeomorphism, then define $T$ as $T = \Psi^{-1}\circ\sigma\circ\Psi$. This implies $h(T) > 0$. Finally, by Equation (\ref{eq:box-counting-bounds-metric-mean-dim}), the metric mean dimension of $\induce{T}$ is bounded above by the box-counting dimension of $\hyp{K}(\Lambda)$, and therefore $\text{mdim}_M(\hyp{K}(X), \rho_H, \induce{T}) = 0$.  
\end{proof}

Given a surjective map $T: X \to X$ and a finite set of points $E \subset X$. A \emph{first trail of $E$} is a set $E_1 \subset T^{-1}(E)$ such that $\#E_1 = \#E$ and $T(E_1) = E$. Moreover, for $n \geq 2$, an \emph{$n$-th trail of $E$} is a set $E_n$ such that $\#E_n = \#E_{n - 1}$ and $T(E_n) = E_{n - 1}$. In particular, $T^n(E_n) = E$.

In what follows, we prove a result similar to Theorem \ref{thm:nonwandering-infty-mean-dim}. Precisely, we show that if $X$ is a locally connected continuum endowed with a convex metric $d$ and $T:X \to X$ is a surjective map such that $\Omega(T) \subsetneq X$, then $\text{mdim}_M(\hyp{K}(X), d_H, \induce{T}) = \infty$.

\begin{proof}[Proof of Theorem \emph{\ref{thm:surjective-map-nonwandering-infinite-metric-mean-dim}}]
    Choosing $x \in X\cap\Omega(T)^\complement$, then $x$ is a wandering point of $X$. Hence, there exists a neighborhood $U$ of $x$ such that for all $n \geq 1$, $T^n(U) \cap U = \varnothing$ and $T^{-n}(U) \cap U = \varnothing$. Since $U^\complement$ is compact, $\eta := \inf_{x_0 U^\complement} d(x, x_0)$ is a positive real number. This implies $B_\eta(x) \subset U$. Given $y \in B_\eta(x)$, there exists an arc $\gamma$ connecting $x$ to $y$ that is isometric to $[0, d(x, y)]$, by Proposition \ref{prop:loc-conn-isometric-arc}. Furthermore, by Remark \ref{remark:isometric-curve-contained-open-ball}, $\gamma \subset B_\eta(x)$. Clearly, $\gamma \subset U$. 
    
    Since $U$ is an open set and $\gamma$ is compact, there exists $\varepsilon_0 > 0$ such that for all $\varepsilon \in (0, \varepsilon_0)$ and for all $z \in \gamma$, $B_\varepsilon(z) \subset U$. Set $\alpha_0 := d(x, y)$ and choose $\lfloor \frac{\alpha_0}{\varepsilon} \rfloor$ distinct points in $\gamma$ such that the distance between any pair of points is greater than or equal to a sufficiently small $\varepsilon$. That is, since $\gamma$ is isometric to $[0, \alpha_0]$, there exists a set $E = \{x_1, ..., x_M\} \subset \gamma$, where $M = \lfloor \frac{\alpha_0}{\varepsilon} \rfloor$, such that $d(x_i, x_j) \geq \varepsilon$, for $1 \leq i < j \leq M$. Observe that, by Remark \ref{rmk:separated-sets-hyperspace}, for distinct nonempty sets $A, B \subset E$, $d_H(A, B) \geq \varepsilon$. 

    Denote all the nonempty subsets of $E$ by $A^i_0$, for $1 \leq i \leq 2^M - 1$, and the $n$-th trail of $A^i_0$ as $A^i_n$. Given $(i_0, i_1, ..., i_{n - 1}) \in \{1, ..., 2^M - 1\}^n$, define
    \begin{equation}\label{eq:def-trail-list-set}
        A_{i_0, ..., i_{n - 1}} = \bigcup\limits_{k = 0}^{n - 1} A^{i_k}_k.
    \end{equation}
    Observe that $A_{i_0, ..., i_{n - 1}}$ is a point of $\hyp{K}(X)$ because it is a finite union of compact sets. Moreover, $A^{i_k}_k \subset T^{-k}(U)$, hence, for $k \geq 1$, the Hausdorff distance from $A^{i_k}_k$ to any subset of $E$ is always greater than or equal to $\varepsilon$. For distinct $(j_0, j_1, ..., j_{n - 1}) \in \{1, ..., 2^M - 1\}^n$, we claim that
    \begin{equation}\label{eq:trail-sets-epsilon-separated}
        \mathcal{H}_n(A_{i_0, ..., i_{n - 1}}, A_{j_0, ..., j_{n - 1}}) \geq \varepsilon,
    \end{equation}
    which we will assume is true for now. This implies $\text{Sep}(\induce{T}, n, \varepsilon) \geq (2^M - 1)^n$. Thus,
    \begin{equation*}
        h(\induce{T}, \varepsilon) = \limsup\limits_{n \to \infty} \frac{\log \text{Sep}(\induce{T}, n, \varepsilon)}{n} \geq M\log 2.
    \end{equation*}
    Therefore,
    \begin{equation*}
        \underline{\text{mdim}_M}(\hyp{K}(X), d_H, \induce{T}) = \liminf\limits_{\varepsilon \to 0} \frac{h(\induce{T}, \varepsilon)}{-\log\varepsilon} \geq \liminf\limits_{\varepsilon \to 0}\frac{M\log 2}{-\log \varepsilon} = \liminf\limits_{\varepsilon \to 0} \left\lfloor\frac{\alpha_0}{\varepsilon}\right\rfloor\frac{\log 2}{-\log \varepsilon} = \infty.
    \end{equation*}
    To conclude this proof, we only need to show that Equation (\ref{eq:trail-sets-epsilon-separated}) is valid. Consider two distinct elements $(i_0, i_1, ..., i_{n - 1})$ and $(j_0, j_1, ..., j_{n - 1})$ of $\{1, ..., 2^M - 1\}^n$. Hence, there is $k \in \{0, ..., n - 1\}$ such that $i_k \neq j_k$. That is, by Equation (\ref{eq:def-trail-list-set}), the $k$-th trail of $A^{i_k}$ belongs to $A_{i_0, ..., i_{n - 1}}$, and the $k$-th trail of $A^{j_k}$ belongs to $A_{j_0, ..., j_{n - 1}}$. In other words,
    \begin{equation*}
        A^{i_k} \subset T^k(A_{i_0, ..., i_{n - 1}}) \text{ and } A^{j_k} \subset T^k(A_{j_0, ..., j_{n - 1}}).
    \end{equation*}
    Since $A^{i_k}$ and $A^{j_k}$ are distinct subsets of $E$, then $d_H(A^{i_k}, A^{j_k}) \geq \varepsilon$, by Remark \ref{rmk:separated-sets-hyperspace}. Therefore,
    \begin{equation*}
        d_H(T^k(A_{i_0, ..., i_{n - 1}}), T^k(A_{j_0, ..., j_{n - 1}})) \geq \varepsilon.
    \end{equation*}
    In fact, consider $k_0$ distinct of $k$ in $\{0, ..., n - 1\}$, then both $T^k(A_{k_0}^{i_{k_0}})$ and $T^k(A_{k_0}^{j_{k_0}})$ belongs to $T^{k - k_0}(U)$, where $T^{k - k_0}(U) \cap U = \varnothing$, because $k - k_0 \neq 0$. This implies
    \begin{equation*}
        \min\{d_H(T^k(A_{k_0}^{i_{k_0}}), A^{i_k}), d_H(T^k(A_{k_0}^{i_{k_0}}), A^{j_k}), d_H(T^k(A_{k_0}^{j_{k_0}}), A^{i_k}), d_H(T^k(A_{k_0}^{j_{k_0}}), A^{j_k})\} \geq \varepsilon.
    \end{equation*}
    Thus,
    \begin{equation*}
        d_H(T^k(A_{i_0, ..., i_{n - 1}}), T^k(A_{j_0, ..., j_{n - 1}})) \geq d_H(A^{i_k}, A^{j_k}) \geq \varepsilon.
    \end{equation*}
\end{proof}

The hypothesis that $X$ is a locally connected continuum is essential, as it is a necessary condition for the metric mean dimension of $\induce{T}$ to attain infinity with respect to the Hausdorff metric induced by some metric $d$.

\section{Final Remarks}

The mean dimension explosion phenomenon indicates that the induced hyperspace map has too many different orbits; hence, its complexity is overwhelmed by even slight separation of points in the base system. Future research may be directed toward classifying zero topological entropy systems with finite (metric) mean dimension of their induced maps.

\subsection{Conflict of Interests} On behalf of all authors, the corresponding author states that there is no conflict of interest.

%%===========================================================================================%%
%% If you are submitting to one of the Nature Portfolio journals, using the eJP submission   %%
%% system, please include the references within the manuscript file itself. You may do this  %%
%% by copying the reference list from your .bbl file, paste it into the main manuscript .tex %%
%% file, and delete the associated \verb+\bibliography+ commands.                            %%
%%===========================================================================================%%

\bibliography{sn-bibliography}% common bib file
%% if required, the content of .bbl file can be included here once bbl is generated
%%\input sn-article.bbl

\end{document}